\theoremstyle{plain}
\newtheorem{theorem}{Theorem}[section]
\newtheorem{corollary}[theorem]{Corollary}
\newtheorem{proposition}[theorem]{Proposition}
\newtheorem{remark}[theorem]{Remark}
\newtheorem*{theorem*}{Theorem}
\numberwithin{equation}{section}
\newcommand{\diff}{\mathop{}\!\mathrm{d}}
\DeclareMathOperator*{\Hess}{Hess}
\DeclareMathOperator*{\tr}{tr}
\DeclareMathOperator*{\esssup}{ess\,sup}
\title{A locally anisotropic regularity criterion for the Navier--Stokes equation in terms of vorticity}
\author[1]{Evan Miller}
\affil[1]{McMaster University, Department of Mathematics and Statistics

millee14@mcmaster.ca}
\begin{document}

\maketitle

\begin{abstract}
In this paper, we will prove a regularity criterion that guarantees solutions of the Navier--Stokes equation must remain smooth so long as the the vorticity restricted to a plane remains bounded in the scale critical space $L^4_t L^2_x$, where the plane may vary in space and time as long as the gradient of the vector orthogonal to the plane remains bounded. This extends previous work by Chae and Choe that guaranteed that solutions of the Navier--Stokes equation must remain smooth as long as the vorticity restricted to a fixed plane remains bounded in family of scale critical mixed Lebesgue spaces.
This regularity criterion also can be seen as interpolating between Chae and Choe's regularity criterion in terms of two vorticity components and Beir\~{a}o da Veiga and Berselli's regularity criterion in terms of the gradient of vorticity direction.
\end{abstract}

\section{Introduction}

The Navier--Stokes equation is the fundamental equation of fluid mechanics. The incompressible Navier--Stokes equation is given by
\begin{equation} \label{NavierStokes}
\begin{split}
    \partial_t u-\Delta u +(u \cdot \nabla)u+\nabla p=0\\
    \nabla \cdot u=0,
\end{split}
\end{equation}
where $p$ is determined entirely by $u$ by convolution with the Poisson kernel,
\begin{equation}
    p=(-\Delta)^{-1}\sum_{i,j=1}^3
    \frac{\partial u_j}{\partial x_i}
    \frac{\partial u_i}{\partial x_j}.
\end{equation}
The Navier--Stokes equation is best viewed as an evolution equation on the space of divergence free vector fields rather than as a system of equations, and that is the vantage point we will adopt in this paper.

Two other fundamentally important objects for the study of the Navier--Stokes equation are the strain and the vorticity. The strain is the symmetric part of $\nabla u,$ and is given by
\begin{equation}
    S_{ij}=\frac{1}{2}\left(\partial_i u_j
    +\partial_j u_i \right).
\end{equation}
The vorticity is the curl of the velocity, $\omega=\nabla \times u.$ It is a vector representation of the anti-symmetric party of $\nabla u,$ with
\begin{equation}
    A=\frac{1}{2} \left (
\begin{matrix}
0 & \omega_3 & -\omega_2 \\
-\omega_3 & 0 & \omega_1 \\
\omega_2 & -\omega_1 & 0 \\
\end{matrix}
\right ),
\end{equation}
where 
\begin{equation}
    A_{ij}= \frac{1}{2}\left(\partial_i u_j
    -\partial_j u_i \right).
\end{equation}
Note that is implies that for all $v\in\mathbb{R}^3,$
\begin{equation} \label{Av}
    Av=\frac{1}{2}v\times\omega.
\end{equation}
The evolution equation for the strain is given by
\begin{equation} \label{NavierStrain}
\partial_t S -\Delta S +(u \cdot \nabla)S 
+S^2+\frac{1}{4}\omega \otimes \omega - \frac{1}{4}|\omega|^2 I_3+ \Hess(p)=0,
\end{equation}
and the evolution equation for vorticity is given by
\begin{equation} \label{NavierVorticity}
\partial_t \omega- \Delta \omega+ (u\cdot \nabla) \omega- S \omega=0.
\end{equation}

Before we proceed, we should define a number of spaces. We will take $\dot{H}^1\left(\mathbb{R}^3\right)$ to be the homogeneous Hilbert space with norm
\begin{equation}
    \|u\|_{\dot{H}^1}^2=\int_{\mathbb{R}^3}4 \pi^2 |\xi|^2
    \left|\hat{u}(\xi)\right|^2 \diff \xi.
\end{equation}
We will note that for all $u\in\dot{H}^1,$ we have 
\begin{equation}
    \|u\|_{\dot{H}^1}^2=\|\nabla u\|_{L^2}^2.
\end{equation}
We will take the inhomogeneous Hilbert space $H^1\left(\mathbb{R}^3\right)$ to be the space with norm
\begin{equation}
    \|u\|_{H^1}^2=\|u\|_{L^2}^2+\|u\|_{\dot{H}^1}^2
\end{equation}
Finally we will take the mixed Lebesgue space $L^p_t L^q_x$ to be the Banach space
\begin{equation}
    L^p_t L^q_x= 
    L^p\left([0,t);L^q\left(\mathbb{R}^3\right)\right).
\end{equation}
We will note in particular that
\begin{equation}
    \|u\|_{L^\infty_t L^\infty_x}=
    \esssup_{(x,\tau)\in\mathbb{R}^3\times[0,t)}
    |u(x,\tau)|
\end{equation}

In his foundational work on the Navier--Stokes equation, Leray proved the global existence of weak solutions to the Navier--Stokes equation satisfying an energy inequality for arbitrary initial data $u^0\in L^2$ \cite{Leray}. Such solutions, however, are not known to be either smooth or unique.
Kato and Fujita introduced the the notion of mild solutions \cite{KatoFujita}, which are solutions that satisfy the Navier--Stokes equation,
\begin{equation}
    \partial_t u-\Delta u=-(u\cdot\nabla)u- \nabla p,
\end{equation}
in the sense of convolution with the heat kernel as in Duhamel's formula. They used this notion of solution, in particular the higher regularity that can be extracted from the heat kernel, to show that the Navier--Stokes equation has unique, smooth solutions locally in time for arbitrary initial data $u^0\in \dot{H}^1.$ Mild solutions are only known to exist locally in time, however, and so this approach based on the heat semigroup cannot guarantee the existence of global-in-time smooth solutions of the Navier--Stokes equation. It is one of the biggest open questions in the field of nonlinear PDEs whether smooth solutions of the Navier--Stokes equations can develop singularities in finite time. If the $\dot{H}^1$ norm remains bounded this guarantees that a solution of the Navier--Stokes equation must remain smooth, but there is no known bound on the growth of the $\dot{H}^1$ norm of $u.$ One bound we do have for smooth solutions of the Navier--Stokes equation is the energy equality.

\begin{proposition} \label{EnergyEquality}
    Suppose $u \in C\left([0,T_{max});H^1\left(\mathbb{R}^3\right)\right)$ is a mild solution to the Navier--Stokes equation. Then for all $0<t<T_{max}$
    \begin{equation}
        \frac{1}{2}\left\|u(\cdot,t)\right\|_{L^2}^2+
        \int_0^t\|\nabla u(\cdot,\tau)\|_{L^2}^2 \diff\tau
        = \frac{1}{2}\left\|u^0\right\|_{L^2}^2.
    \end{equation}
\end{proposition}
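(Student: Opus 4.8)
The plan is to test the Navier--Stokes equation against the solution $u$ itself in $L^2\left(\mathbb{R}^3\right)$ and then integrate in time. Pairing $\partial_t u - \Delta u + (u\cdot\nabla)u + \nabla p = 0$ with $u$, there are four contributions. The time-derivative term gives $\langle\partial_t u, u\rangle_{L^2} = \frac{1}{2}\frac{\diff}{\diff t}\|u\|_{L^2}^2$. The viscous term, after integrating by parts, gives $-\langle\Delta u, u\rangle_{L^2} = \|\nabla u\|_{L^2}^2$. The pressure term vanishes by incompressibility, $\langle\nabla p, u\rangle_{L^2} = -\langle p, \nabla\cdot u\rangle_{L^2} = 0$. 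Finally the transport term also vanishes, again by $\nabla\cdot u = 0$:
\[
  \langle(u\cdot\nabla)u, u\rangle_{L^2} = \sum_{i,j}\int_{\mathbb{R}^3} u_j\,(\partial_j u_i)\,u_i \diff x = \frac{1}{2}\sum_{j}\int_{\mathbb{R}^3} u_j\,\partial_j|u|^2 \diff x = -\frac{1}{2}\int_{\mathbb{R}^3}(\nabla\cdot u)\,|u|^2 \diff x = 0 .
\]
Adding the four identities yields the differential equality $\frac{1}{2}\frac{\diff}{\diff t}\|u(\cdot,t)\|_{L^2}^2 + \|\nabla u(\cdot,t)\|_{L^2}^2 = 0$ on $(0, T_{max})$, and integrating over $[\varepsilon, t]$ and letting $\varepsilon\to 0^+$ gives the stated identity.

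The real content is justifying that each of these steps is a genuine identity and not merely a formal manipulation. I would first record that a mild solution with data in $\dot{H}^1$, by a standard bootstrap of the Duhamel representation against the smoothing of the heat semigroup, is $C^\infty$ on $\mathbb{R}^3\times(0,T_{max})$ with all spatial derivatives bounded and decaying at spatial infinity; in particular $u(\cdot,\tau)\in H^k$ for every $k$ and every $\tau\in(0,T_{max})$, with $\tau\mapsto u(\cdot,\tau)$ continuously differentiable into $L^2$ there. On the open interval $(0,T_{max})$ all integrations by parts above are legitimate with no boundary term at infinity, the pairings are honest $L^2$ inner products, and $\tau\mapsto\|\nabla u(\cdot,\tau)\|_{L^2}^2$ is continuous. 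Thus for every $0<\varepsilon<t<T_{max}$,
\[
  \frac{1}{2}\|u(\cdot,t)\|_{L^2}^2 + \int_\varepsilon^t\|\nabla u(\cdot,\tau)\|_{L^2}^2 \diff\tau = \frac{1}{2}\|u(\cdot,\varepsilon)\|_{L^2}^2 .
\]

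It remains to send $\varepsilon\to 0^+$. Since $u\in C\left([0,T_{max});H^1\right)$, a fortiori $u\in C\left([0,T_{max});L^2\right)$, so $\|u(\cdot,\varepsilon)\|_{L^2}^2\to\|u^0\|_{L^2}^2$; and since $\tau\mapsto\|\nabla u(\cdot,\tau)\|_{L^2}^2$ is nonnegative and continuous on $[0,t]$, the monotone convergence theorem gives $\int_\varepsilon^t\to\int_0^t$. This proves the energy equality. The main obstacle is therefore the smoothing/decay step: one must know the solution is regular enough for the energy computation to make sense term by term. This is precisely the regularity a mild solution enjoys but a general Leray--Hopf weak solution is not known to have, which is why the latter satisfy only the energy \emph{inequality} (a defect measure may appear in the limiting construction) while a mild solution in $C\left([0,T_{max});H^1\right)$ satisfies the energy \emph{equality}. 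An alternative that sidesteps pointwise-in-time smoothness is to verify $\partial_t u\in L^2_{\mathrm{loc}}\left((0,T_{max});H^{-1}\right)$ --- which uses the extra integrability of mild solutions --- and then apply the Lions--Magenes lemma for the Gelfand triple of divergence-free $H^1$, $L^2$, and $H^{-1}$ fields to get absolute continuity of $\tau\mapsto\|u(\cdot,\tau)\|_{L^2}^2$ with a.e.\ derivative $2\langle\partial_\tau u, u\rangle_{H^{-1},H^1}$.
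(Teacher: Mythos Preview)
Your argument is the standard derivation of the energy equality and is correct: pair the equation with $u$, use $\nabla\cdot u=0$ to kill the pressure and transport terms, integrate by parts on the Laplacian, and integrate in time, with the smoothing of mild solutions (or the Lions--Magenes lemma) supplying the rigour. The paper does not actually give a proof of this proposition---it is stated as a well-known fact and used as input later---so there is nothing to compare against; your write-up is exactly the classical justification one would expect.
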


The energy equality gives us bounds on solutions $u$ of the Navier--Stokes equation in $L_t^\infty L^2_x$ and $L^2_t\dot{H}^1_x,$ but this cannot guarantee regularity, because both of these bounds are supercritical with respect to the scale invariance of the Navier--Stokes equation. The solution set of the Navier--Stokes equation is invariant under the rescaling
\begin{equation} \label{NSscaling}
    u^\lambda(x,t)=\lambda u(\lambda x,\lambda^2, t).
\end{equation}
If we do have a bound on $u$ in a scale invariant 
$L^p_t L^q_x$ space then that is enough to guarantee regularity. In particular, the Ladyzhenskaya-Prodi-Serrin regularity criterion \cite{Ladyzhenskaya,Prodi,Serrin} states that if a smooth solution of the Navier--Stokes equation develops a singularity in finite-time $T_{max}<+\infty,$ then for all $\frac{2}{p}+\frac{3}{q}=2, 3<q\leq+\infty,$ then
\begin{equation}
    \int_0^{T_{max}}\|u(\cdot,t)\|_{L^q}^p \diff t
    =+\infty.
\end{equation}
It is easy to check that this regularity criterion is scale critical with respect to the rescaling in \eqref{NSscaling}. This result was extended to the $q=3, p+\infty$ case by Escauriaza, Seregin, and \v{S}ver\'ak \cite{SverakSereginL3}, where they proved that if $T_{max}<+\infty,$ then
\begin{equation}
\limsup_{t \to T_{max}}\|u(\cdot,t)\|_{L^3(\mathbb{R}^3)}=+\infty.
\end{equation}
There have been many extensions of these regularity criteria, far more than we can discuss here, so we will confine ourselves to discussing regularity criteria directly related to this paper.
For a very thorough overview of the literature on regularity criteria for solutions to the Navier-Stokes equation, see Chapter 11 in \cite{NS21}.

One regularity criterion proven by Chae and Choe \cite{ChaeChoe} has a particular geometric significance. Chae and Choe prove a scale critical regularity criterion on two components of vorticity. For a two dimensional solution of the Navier--Stokes equation in the $xy$ plane, the vorticity is entirely in the $z$ direction, perpendicular to the $xy$ plane. This means that if the vorticity restricted to the $xy$ plane or, using rotational invariance, any fixed plane, remains bounded in a scale critical space, then the solution is not too far from being two dimensional. 
The size of the vorticity restricted to a plane can be interpreted as a measure of how fully three dimensional a solution of the Navier--Stokes equation is, so Chae and Choe's regularity criterion can be interpreted as saying that blowup must be fully three dimensional. Chae and Choe's result is the following.

\begin{theorem} \label{Vort2ComP}
Suppose $u\in C\left([0,T_{max});\dot{H}^1\left(\mathbb{R}^3\right)\right)$ is a mild solution of the Navier--Stokes equation, 
and let $v\in\mathbb{R}^3, |v|=1.$
Then for all $\frac{3}{q}+\frac{2}{p}=2,
\frac{3}{2}<q< +\infty,$ there exists $C_q>0$ depending only on $q$ such that for all $0<t<T_{max}$
\begin{equation}
    \|u(t)\|_{\dot{H}^1}^2\leq \|u(0)\|_{\dot{H}^1}^2 \exp\left(C_q\int_0^t
    \left\|v \times \omega(\cdot,\tau)\right
    \|_{L^q}^p\diff\tau\right).
\end{equation}
In particular, if $T_{max}<+\infty,$ then
\begin{equation}
    \int_0^{T_{max}}\left\|v \times \omega
    (\cdot,t)\right\|_{L^q}^p\diff t
    =+\infty.
\end{equation}
\end{theorem}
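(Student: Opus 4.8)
The plan is to run the enstrophy estimate and close it using a structural bound on the vortex stretching term that isolates the component of $\omega$ orthogonal to $v$. First, by the rotational invariance of the Navier--Stokes equation and of every norm in the statement, I may take $v=e_3$, so that $|v\times\omega|^2=\omega_1^2+\omega_2^2$; write $\tilde\omega=(\omega_1,\omega_2,0)$. Recalling $\|u\|_{\dot H^1}^2=\|\nabla u\|_{L^2}^2=\|\omega\|_{L^2}^2$ for divergence free $u$, I would test the vorticity equation \eqref{NavierVorticity} against $\omega$ in $L^2$; the transport term integrates to zero and the only surviving nonlinear contribution is the vortex stretching term, so that
\begin{equation*}
\frac{1}{2}\frac{\diff}{\diff t}\|\omega(\cdot,t)\|_{L^2}^2+\|\nabla\omega(\cdot,t)\|_{L^2}^2=\int_{\mathbb{R}^3}\omega\cdot S\omega\,\diff x .
\end{equation*}

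The heart of the argument is the estimate
\begin{equation*}
\left|\int_{\mathbb{R}^3}\omega\cdot S\omega\,\diff x\right|\le C\,\|\tilde\omega\|_{L^q}\,\|\omega\|_{L^a}\,\|\omega\|_{L^b},\qquad \frac1a+\frac1b=1-\frac1q,\ \ a,b\in(2,6].
\end{equation*}
To prove it I would write $\int\omega\cdot S\omega=\int(\omega\cdot\nabla)u\cdot\omega$ and integrate by parts, using the divergence free conditions $\nabla\cdot u=0$ \emph{and} $\nabla\cdot\omega=0$: the second is precisely what allows one to peel a factor of $\tilde\omega$ off the otherwise problematic $\omega_3^2\,\partial_3u_3$ contribution, after which the Calderón--Zygmund bound $\|\nabla u\|_{L^b}\lesssim\|\omega\|_{L^b}$ (and, for any terms left carrying a bare velocity factor, the Sobolev embedding $\dot H^1\hookrightarrow L^6$) turns everything into Lebesgue norms of $\omega$ multiplied by a single factor of $\tilde\omega$. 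I expect this to be the main obstacle: arranging the integration by parts so that \emph{every} term carries a factor of $v\times\omega$, uniformly for $\tfrac32<q<\infty$, is exactly Chae and Choe's observation that the stretching term ``sees'' only the two components of vorticity transverse to $v$.

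Granting the key estimate, the rest is routine. Taking $a=b=\frac{2q}{q-1}\in(2,6]$ (legitimate since $q>\tfrac32$), the Gagliardo--Nirenberg inequality on $\mathbb{R}^3$ gives $\|\omega\|_{L^a}\le C\|\omega\|_{L^2}^{1-\frac{3}{2q}}\|\nabla\omega\|_{L^2}^{\frac{3}{2q}}$, hence
\begin{equation*}
\left|\int_{\mathbb{R}^3}\omega\cdot S\omega\,\diff x\right|\le C\,\|\tilde\omega\|_{L^q}\,\|\omega\|_{L^2}^{2-\frac3q}\,\|\nabla\omega\|_{L^2}^{\frac3q}.
\end{equation*}
Since $q>\tfrac32$ forces the exponent $\tfrac3q$ on $\|\nabla\omega\|_{L^2}$ to be strictly below $2$, Young's inequality absorbs $\tfrac12\|\nabla\omega\|_{L^2}^2$ into the left side of the enstrophy identity; a check of the Hölder conjugates shows the exponent produced on $\|\tilde\omega\|_{L^q}$ is exactly $p=\frac{2q}{2q-3}$ (the one with $\frac2p+\frac3q=2$) and that on $\|\omega\|_{L^2}$ is $2$. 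This yields $\frac{\diff}{\diff t}\|\omega(\cdot,t)\|_{L^2}^2\le C_q\,\|v\times\omega(\cdot,t)\|_{L^q}^p\,\|\omega(\cdot,t)\|_{L^2}^2$, and Grönwall's inequality together with $\|\omega\|_{L^2}=\|u\|_{\dot H^1}$ gives the stated bound.

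Finally, for the blowup statement, suppose $T_{max}<+\infty$ while $\int_0^{T_{max}}\|v\times\omega(\cdot,t)\|_{L^q}^p\,\diff t<+\infty$. Then the inequality just proved bounds $\|u(\cdot,t)\|_{\dot H^1}$ uniformly on $[0,T_{max})$, contradicting the standard continuation criterion for mild solutions from the Kato--Fujita theory (a mild solution with $\limsup_{t\to T_{max}}\|u(\cdot,t)\|_{\dot H^1}<+\infty$ extends past $T_{max}$). Hence the integral diverges.
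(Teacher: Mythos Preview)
Your outline is essentially the original Chae--Choe proof and is correct at the level of a sketch: the numerology with $a=b=\tfrac{2q}{q-1}$, Gagliardo--Nirenberg, Young, and Gr\"onwall is right, and you correctly identify the one nontrivial step, namely that $\int(\omega\cdot\nabla)u\cdot\omega$ can be rewritten (using $\nabla\cdot u=\nabla\cdot\omega=0$ and integration by parts, plus $\|\nabla u\|_{L^b}\lesssim\|\omega\|_{L^b}$) so that every term carries a factor of $\tilde\omega$. The paper, however, does not reprove the theorem along these lines. It instead derives Theorem~\ref{Vort2ComP} from the strain side: starting from Corollary~\ref{StrainAllDirections} (the regularity criterion on $Sv$, itself a consequence of the $\lambda_2^+$ criterion), it shows in Proposition~\ref{NormEquiv} that for a \emph{fixed} unit vector $v$ one has $2Sv=(v\cdot\nabla)u+\nabla(u\cdot v)$ and $v\times\omega=(v\cdot\nabla)u-\nabla(u\cdot v)$, so that the Helmholtz decomposition (boundedness of the Leray projector on $L^q$) gives the two-sided equivalence $\|Sv\|_{L^q}\sim_q\|v\times\omega\|_{L^q}$. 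Your direct vorticity-equation argument is self-contained and avoids the strain machinery entirely; the paper's route, by contrast, makes explicit the equivalence between the $Sv$ and $v\times\omega$ criteria, which is exactly the bridge it later exploits (in Theorem~\ref{MainTheorem}) to let $v$ vary in space and time.
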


\begin{remark}
Chae and Choe's result is not stated in this form in \cite{ChaeChoe}. Chae and Choe prove a regularity criterion on two components of the vorticity, 
$(\omega_1,\omega_2,0).$ However, we can see that 
\begin{equation}
    e_3\times \omega= (-\omega_2,\omega_1,0).
\end{equation}
This means that a regularity criterion on $(\omega_1,\omega_2,0)$ is equivalent to a regularity criterion on $e_3\times \omega.$ Using the rotational invariance of the Navier--Stokes equation, this is equivalent to a regularity criterion on $v\times \omega$ for any fixed unit vector $v\in \mathbb{R}^3.$ We note that the Navier--Stokes equation is rationally invariant in the sense that if $Q\in SO(3)$ is a rotation matrix on $\mathbb{R}^3,$ and $u$ is a solution of the Navier--Stokes equation, then $u^Q$ is also a solution of the Navier--Stokes equation where
\begin{equation}
    u^Q(x)=Q^{tr}u\left(Qx \right).
\end{equation}
See chapter 1 in \cite{MajdaBertozzi} for further discussion.
\end{remark}

We will note that because $\omega=\nabla \times u,$ is a derivative of $u,$ the vorticity has the rescaling.
\begin{equation}
    \omega^\lambda(x,t)=
    \lambda^2\omega(\lambda x,\lambda^2 t).
\end{equation}
If $\omega$ is a solution of the vorticity equation, then so is $\omega^\lambda$ for all $\lambda>0.$
Theorem \ref{Vort2ComP} is critical with respect to this rescaling.

Theorem \ref{Vort2ComP} was then extended into Besov spaces by Chen and Zhang \cite{ChenZhangBesov} and more recently into the endpoint Besov space, $\dot{B}^{-\frac{3}{q}}_{\infty,\infty},$ by Guo, Ku\v{c}era, and Skal\'{a}k \cite{GuoBesov}. This is an improvement on Theorem \ref{Vort2ComP}, because we have an embedding $L^q \hookrightarrow \dot{B}^{-\frac{3}{q}}_{\infty,\infty},$ and $\dot{B}^{-\frac{3}{q}}_{\infty,\infty}$ is the largest translation invariant Banach space with the the same scaling relation as $L^q.$ This is therefore, the furthest advance that can be made to Chae and Choe's regularity criterion solely by loosening the assumptions on the space.

Another regularity criterion with geometric significance is the regularity criterion in terms of the positive part of the intermediate eigenvalue of the strain matrix. This was first proven by Neustupa and Penel in \cite{NeuPen1,NeuPen2,NeuPen3} and independently by the author using different methods in \cite{MillerStrain}.

\begin{theorem} \label{StrainRegCrit}
Suppose $u\in C\left([0,T_{max});\dot{H}^1\left(\mathbb{R}^3\right)\right)$ is a mild solution of the Navier--Stokes equation.
Let $\lambda_1(x,t) \leq \lambda_2(x,t) \leq \lambda_3(x,t)$ be the eigenvalues of $S(x,t),$ and 
let $\lambda^+_2(x,t)=\max\left\{0,\lambda_2(x,t)\right\}.$
Then for all $\frac{3}{q}+\frac{2}{p}=2,
\frac{3}{2}<q\leq +\infty,$ there exists $C_q>0$ depending only on $q$ such that for all $0<t<T_{max}$
\begin{equation}
    \|u(t)\|_{\dot{H}^1}^2\leq \|u(0)\|_{\dot{H}^1}^2 \exp\left(C_q\int_0^t
    \left\|\lambda_2^+(\cdot,\tau)\right\|_{L^q}^p\diff\tau\right).
\end{equation}
In particular, if $T_{max}<+\infty,$ then
\begin{equation}
    \int_0^{T_{max}}\left\|\lambda_2^+(\cdot,t)\right\|_{L^q}^p\diff t
    =+\infty.
\end{equation}
\end{theorem}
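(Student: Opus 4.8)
The plan is to run a Ladyzhenskaya--Prodi--Serrin type enstrophy estimate in which the vortex-stretching nonlinearity is controlled by a purely kinematic identity that makes the intermediate eigenvalue, rather than the full strain, the relevant quantity. Since $u$ is divergence free, $\|u\|_{\dot H^1}=\|\nabla u\|_{L^2}=\|\omega\|_{L^2}$, so it suffices to bound the enstrophy $\|\omega(\cdot,\tau)\|_{L^2}^2$. By the parabolic smoothing intrinsic to mild solutions \cite{KatoFujita}, $u$ is smooth on $\mathbb{R}^3\times(0,T_{max})$ with enough spatial decay to justify the manipulations below; one first proves the estimate on $[\varepsilon,t]$ for $\varepsilon>0$ and lets $\varepsilon\to 0^+$ using continuity of $\tau\mapsto\|u(\cdot,\tau)\|_{\dot H^1}$. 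Pairing the vorticity equation \eqref{NavierVorticity} with $\omega$ and integrating by parts (the transport term drops out because $\nabla\cdot u=0$) gives
\[
\frac12\frac{\diff}{\diff \tau}\|\omega(\cdot,\tau)\|_{L^2}^2+\|\nabla\omega(\cdot,\tau)\|_{L^2}^2=\int_{\mathbb{R}^3}\omega\cdot S\omega\diff x.
\]

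The crucial step is to rewrite the right-hand side. A purely kinematic integration-by-parts identity, valid for any divergence free vector field decaying at infinity (cf.\ \cite{MillerStrain}), gives $\int_{\mathbb{R}^3}\tr(S^3)\diff x=-\frac34\int_{\mathbb{R}^3}\omega\cdot S\omega\diff x$. Since $\tr(S)=0$, the eigenvalues satisfy $\lambda_1+\lambda_2+\lambda_3=0$, hence $\lambda_1\le 0\le\lambda_3$ and $\tr(S^3)=3\lambda_1\lambda_2\lambda_3$, so
\[
\int_{\mathbb{R}^3}\omega\cdot S\omega\diff x=-4\int_{\mathbb{R}^3}\lambda_1\lambda_2\lambda_3\diff x\le 4\int_{\mathbb{R}^3}\lambda_2^+(-\lambda_1\lambda_3)\diff x\le 2\int_{\mathbb{R}^3}\lambda_2^+|S|^2\diff x,
\]
where I use $-\lambda_1\lambda_3\ge 0$ pointwise (so that $\lambda_2$ may be replaced by $\lambda_2^+$) and then $-\lambda_1\lambda_3=|\lambda_1||\lambda_3|\le\frac12(\lambda_1^2+\lambda_3^2)\le\frac12|S|^2$. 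This is the heart of the matter: the vortex-stretching term has no useful pointwise bound by $\lambda_2^+$ (a strain with two negative eigenvalues and one positive eigenvalue has $\lambda_2^+=0$ while $\omega\cdot S\omega>0$ whenever $\omega$ aligns with the positive eigendirection), yet the integrated identity shows it is nonetheless governed by $\lambda_2^+$.

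From here the estimate is routine. Applying H\"older in $x$ with exponents $q$ and $q'=\frac{q}{q-1}$, the Calder\'on--Zygmund bound $\|S\|_{L^{2q'}}\lesssim\|\omega\|_{L^{2q'}}$, and the Gagliardo--Nirenberg inequality $\|\omega\|_{L^{2q'}}\lesssim\|\omega\|_{L^2}^{1-\theta}\|\nabla\omega\|_{L^2}^{\theta}$ with $\theta=\frac{3}{2q}\in[0,1)$ for $\frac32<q\le+\infty$ yields
\[
\int_{\mathbb{R}^3}\lambda_2^+|S|^2\diff x\lesssim\|\lambda_2^+\|_{L^q}\|\omega\|_{L^2}^{2-3/q}\|\nabla\omega\|_{L^2}^{3/q}.
\]
Because $\frac3q<2$, Young's inequality absorbs the factor $\|\nabla\omega\|_{L^2}^{3/q}$ into the dissipation $\|\nabla\omega\|_{L^2}^2$, leaving a remainder $C_q\|\lambda_2^+\|_{L^q}^{p}\|\omega\|_{L^2}^2$ in which the conjugate exponent is exactly $p=\frac{2q}{2q-3}$, i.e.\ the exponent with $\frac2p+\frac3q=2$. (When $q=+\infty$ one skips Gagliardo--Nirenberg and Young and bounds $\int_{\mathbb{R}^3}\lambda_2^+|S|^2\diff x\le\frac12\|\lambda_2^+\|_{L^\infty}\|\omega\|_{L^2}^2$ directly, with $p=1$.) This gives $\frac{\diff}{\diff \tau}\|\omega(\cdot,\tau)\|_{L^2}^2\le C_q\|\lambda_2^+(\cdot,\tau)\|_{L^q}^{p}\|\omega(\cdot,\tau)\|_{L^2}^2$, and Gr\"onwall's inequality, together with $\|\omega\|_{L^2}^2=\|u\|_{\dot H^1}^2$, yields the stated exponential bound; since a mild solution with $T_{max}<+\infty$ must have $\|u(\cdot,t)\|_{\dot H^1}$ unbounded as $t\to T_{max}$ \cite{KatoFujita}, the divergence of $\int_0^{T_{max}}\|\lambda_2^+(\cdot,t)\|_{L^q}^p\diff t$ follows.

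The main obstacle --- indeed the only non-routine ingredient --- is the kinematic identity $\int_{\mathbb{R}^3}\tr(S^3)\diff x=-\frac34\int_{\mathbb{R}^3}\omega\cdot S\omega\diff x$ together with the observation that, after extracting $\lambda_2^+$, the residual factor $-\lambda_1\lambda_3$ is controlled by $|S|^2$; this is precisely what singles out the \emph{intermediate} eigenvalue. Everything downstream is the Ladyzhenskaya--Prodi--Serrin machinery, and the remaining technical point --- bootstrapping a mild $\dot H^1$ solution to enough regularity and decay to legitimize the energy identity and the integrations by parts --- is standard.
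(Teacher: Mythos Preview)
The paper does not actually prove Theorem~\ref{StrainRegCrit}; it is quoted as a known result, attributed to Neustupa--Penel \cite{NeuPen1,NeuPen2,NeuPen3} and to \cite{MillerStrain}. So there is no in-paper proof to compare against.

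That said, your argument is correct and is essentially the proof given in \cite{MillerStrain}: the enstrophy identity for $\omega$, the kinematic identity $\int_{\mathbb{R}^3}\omega\cdot S\omega\,\diff x=-4\int_{\mathbb{R}^3}\det S\,\diff x$ (equivalently $\int\tr(S^3)=-\tfrac34\int\omega\cdot S\omega$), the trace-free algebra $\tr(S^3)=3\lambda_1\lambda_2\lambda_3$ with $\lambda_1\le 0\le\lambda_3$, and then the standard H\"older/Gagliardo--Nirenberg/Young chain followed by Gr\"onwall. Your handling of the endpoint $q=+\infty$ and the exponent bookkeeping $p=\tfrac{2q}{2q-3}$ are both fine, as is the appeal to Calder\'on--Zygmund for $\|S\|_{L^{2q'}}\lesssim\|\omega\|_{L^{2q'}}$ when $1<2q'<\infty$. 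The only point worth flagging is purely expository: you might state explicitly that $\|S\|_{L^2}^2=\tfrac12\|\omega\|_{L^2}^2$ so that the $q=+\infty$ case really does bypass Calder\'on--Zygmund, but this does not affect the validity of the proof.
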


This regularity criterion gives a geometric characterization of the structure of potential finite-time blowup solutions of the Navier--Stokes equation. Theorem \ref{StrainRegCrit} says that in order for blowup to occur, the flow needs to be stretching in two directions, while compressing more strongly in the third. Points where the strain has two positive eigenvalues, corresponding to stretching in two directions, and one negative eigenvalue, corresponding to compressing more strongly in a third, are the points that drive enstrophy growth and hence blowup. This provides more insight into the qualitative properties of blowup solutions than the regularity criteria that just involve the size of $u$ or $\omega.$
The author also proved the following corollary of Theorem \ref{StrainRegCrit} in \cite{MillerStrain}.

\begin{corollary} \label{StrainAllDirections}
Suppose $u\in C\left([0,T_{max});\dot{H}^1\left(\mathbb{R}^3\right)\right)$ is a mild solution of the Navier--Stokes equation.
Let $v \in L^\infty\left (\mathbb{R}^3 \times
[0,T_{max}];\mathbb{R}^3\right ),$ 
with $|v(x,t)|=1$ almost everywhere.
Then for all $\frac{3}{q}+\frac{2}{p}=2,
\frac{3}{2}<q\leq +\infty,$ there exists $C_q>0$ depending only on $q$ such that for all $0<t<T_{max}$
\begin{equation}
\|u(\cdot,t)\|_{\dot{H}^1}^2 \leq 
\left\|u^0\right\|_{\dot{H}^1}^2
\exp \left (C_q \int_{0}^t\|S(\cdot,\tau)v(\cdot,\tau)
\|_{L^q}^p \diff \tau \right ).
\end{equation}
In particular, if $T_{max}<+\infty,$ then
\begin{equation}
    \int_{0}^{T_{max}}\|S(\cdot,t)v(\cdot,t)\|_{L^q\left (\mathbb{R}^3\right )}^p \diff t
    =+\infty.
\end{equation}
\end{corollary}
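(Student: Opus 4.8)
The plan is to obtain this corollary as an immediate consequence of Theorem \ref{StrainRegCrit}, by reducing it to the pointwise estimate
\begin{equation} \label{pointwiseSv}
    \lambda_2^+(x,t) \leq \left|S(x,t)\, v(x,t)\right|,
\end{equation}
which holds at (almost) every point for an arbitrary field $v$ with $|v(x,t)| = 1$ almost everywhere. Granting \eqref{pointwiseSv}, we have $\left\|\lambda_2^+(\cdot,\tau)\right\|_{L^q} \leq \left\|S(\cdot,\tau) v(\cdot,\tau)\right\|_{L^q}$ for almost every $\tau \in [0,T_{max})$, so after integrating the $p$-th powers in time and substituting into the exponential estimate of Theorem \ref{StrainRegCrit} we obtain both the claimed bound on $\|u(\cdot,t)\|_{\dot H^1}$ and, in the case $T_{max}<+\infty$, the divergence of $\int_0^{T_{max}}\left\|S(\cdot,t) v(\cdot,t)\right\|_{L^q}^p \diff t$.

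To prove \eqref{pointwiseSv}, fix $(x,t)$ and diagonalize the symmetric matrix $S = S(x,t)$ in an orthonormal eigenbasis $\{f_1, f_2, f_3\}$ with $S f_i = \lambda_i f_i$. Writing $v = a_1 f_1 + a_2 f_2 + a_3 f_3$ with $a_1^2 + a_2^2 + a_3^2 = 1$, we get $|Sv|^2 = \lambda_1^2 a_1^2 + \lambda_2^2 a_2^2 + \lambda_3^2 a_3^2$. If $\lambda_2 \leq 0$ then $\lambda_2^+ = 0$ and there is nothing to prove. If $\lambda_2 > 0$, the key point is that $S$ is trace-free, since $\tr S = \nabla \cdot u = 0$; hence $\lambda_1 = -(\lambda_2 + \lambda_3) \leq -2\lambda_2$ because $\lambda_3 \geq \lambda_2 > 0$, and therefore $\lambda_1^2 \geq \lambda_2^2$ as well as $\lambda_3^2 \geq \lambda_2^2$. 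Consequently
\begin{equation}
    |Sv|^2 = \lambda_1^2 a_1^2 + \lambda_2^2 a_2^2 + \lambda_3^2 a_3^2 \geq \lambda_2^2 \left(a_1^2 + a_2^2 + a_3^2\right) = \lambda_2^2 = \left(\lambda_2^+\right)^2,
\end{equation}
which gives \eqref{pointwiseSv}.

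There is essentially no analytic obstacle in this argument: the entire content is the elementary linear-algebra fact above, and the only place the structure of the equation enters is through the divergence-free constraint $\nabla \cdot u = 0$, which forces $|\lambda_1|$ to dominate $\lambda_2$ whenever $\lambda_2$ is positive. The remaining points are routine: since $v \in L^\infty$ and $S(\cdot,\tau) \in L^2$ for a mild solution $u \in \dot H^1$, the map $\tau \mapsto \left\|S(\cdot,\tau) v(\cdot,\tau)\right\|_{L^q}$ is a well-defined measurable function with values in $[0,+\infty]$, so \eqref{pointwiseSv} may be integrated in time and inserted into Theorem \ref{StrainRegCrit} exactly as stated — in particular, finiteness of the right-hand time integral forces finiteness of the corresponding integral of $\left\|\lambda_2^+\right\|_{L^q}^p$, and the blowup conclusion is the contrapositive.
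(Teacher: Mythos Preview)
Your proposal is correct and follows exactly the approach indicated in the paper: the corollary is deduced from Theorem \ref{StrainRegCrit} via the pointwise inequality $|Sv|\geq |\lambda_2|\geq \lambda_2^+$ for any unit vector $v$, which holds because $\tr(S)=0$ forces $\lambda_2$ to be the eigenvalue of smallest magnitude. Your explicit diagonalization and case split simply spell out this linear-algebra fact in detail.
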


This result follows from Theorem \ref{StrainRegCrit} because $\tr(S)=0,$ so $\lambda_2$ is the smallest eigenvalue of $S$ in magnitude. Therefore, if 
$v\in\mathbb{R}^3, |v|=1,$ then
\begin{equation}
    |Sv|\geq |\lambda_2|.
\end{equation}
Corollary \ref{StrainAllDirections} follows immediately from this fact and Theorem \ref{StrainRegCrit}.
We will show in Section 3 that the special case of Corollary \ref{StrainAllDirections} where we take $v\in \mathbb{R}^3$ to be a fixed unit vector is equivalent to Chae and Choe's result, Theorem \ref{Vort2ComP}. 

This raises an interesting question: can Chae and Choe's result be extended to a regularity criterion on $v\times \omega,$ where $v$ is a unit vector that is allowed to vary in space. Clearly, unlike in Corollary \ref{StrainAllDirections}, we won't be able to take an arbitrary unit vector, otherwise we would simply take $v=\frac{\omega}{|\omega|},$ and regularity would be guaranteed for any solution of the Navier--Stokes equation. In fact, using Corollary \ref{StrainAllDirections}, it is possible to improve Chae and Choe's result to one that allows the unit vector $v$ to vary in space and time, so long as $\nabla v$ remains bounded. The main theorem of this paper is the following.

\begin{theorem} \label{MainTheoremIntro}
Suppose $u\in C\left([0,T_{max});H^1\left(\mathbb{R}^3\right)\right)$ is a mild solution of the Navier--Stokes equation.
Suppose $v \in L^\infty\left (\mathbb{R}^3 \times
[0,+\infty);\mathbb{R}^3\right ),$
with $|v(x,t)|=1$ almost everywhere, 
and suppose $\nabla v\in L^\infty_{loc}\left([0,+\infty);
L^\infty\left(\mathbb{R}^3\right)\right),$
Then for all for all $0<t<T_{max}$
\begin{equation}
\|u(\cdot,t)\|_{\dot{H}^1}^2 \leq 
\left\|u^0\right\|_{\dot{H}^1}^2
\exp \left (2916 C_2\left\|u^0\right\|_{L^2}^4\|\nabla v
\|_{L^\infty_t L^\infty_x}^2 +\frac{C_2}{8} 
\int_{0}^t\|v(\cdot,\tau)\times \omega(\cdot,\tau)
\|_{L^2}^4 \diff \tau \right ),
\end{equation}
where $C_2$ is a constant independent of $u,$ taken is in Theorem \ref{StrainRegCrit} and Corollary \ref{StrainAllDirections}.

In particular, if $T_{max}<+\infty,$ then
\begin{equation}
    \int_{0}^{T_{max}}\|v(\cdot,t)\times \omega(\cdot,t)
\|_{L^2}^4 \diff t=+\infty. 
\end{equation}
\end{theorem}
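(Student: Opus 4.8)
The plan is to obtain Theorem~\ref{MainTheoremIntro} from Corollary~\ref{StrainAllDirections}, applied with $q=2$ and $p=4$ (these satisfy $\tfrac{3}{q}+\tfrac{2}{p}=2$ and $\tfrac{3}{2}<q\leq+\infty$) and with the given unit vector field $v$; this is where the constant $C_2$ enters, and it reduces the whole theorem to an estimate for $\int_0^t\|S(\cdot,\tau)v(\cdot,\tau)\|_{L^2}^4\diff\tau$ in terms of $\int_0^t\|v\times\omega\|_{L^2}^4\diff\tau$ and $\nabla v$. The engine of that estimate is an exact $L^2$ identity. Writing $(Mv)_i:=v_j\,\partial_iu_j$, the relation $2S=\nabla u+(\nabla u)^{tr}$ together with \eqref{Av} gives the pointwise splittings $2Sv=Mv+(v\cdot\nabla)u$ and $v\times\omega=Mv-(v\cdot\nabla)u$, so polarization in $L^2$ yields
\[
\|Sv\|_{L^2}^2=\tfrac14\|v\times\omega\|_{L^2}^2+\int_{\mathbb{R}^3}\bigl(v_j\,\partial_iu_j\bigr)\bigl(v_k\,\partial_ku_i\bigr)\diff x.
\]

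Everything now rests on showing the last integral, call it $E$, is lower order. Integrating by parts in $x_i$ to move the derivative off $u_j$ gives
\[
E=-\int_{\mathbb{R}^3}u_j\Bigl[(\partial_iv_j)\,v_k\,\partial_ku_i+v_j\,(\partial_iv_k)\,\partial_ku_i+v_jv_k\,\partial_i\partial_ku_i\Bigr]\diff x,
\]
and the third term is annihilated by incompressibility, since $v_jv_k\,\partial_i\partial_ku_i=v_jv_k\,\partial_k(\nabla\cdot u)=0$. The two surviving terms each carry exactly one factor of $\nabla v$ (in particular $E\equiv0$ for constant $v$, which forces $\|Sv\|_{L^2}=\tfrac12\|v\times\omega\|_{L^2}$ and is the $q=2$ form of what makes Corollary~\ref{StrainAllDirections} with a constant vector equivalent to Theorem~\ref{Vort2ComP}), so H\"older's inequality together with $|v|\leq1$ a.e.\ gives $|E(\tau)|\leq C\,\|\nabla v(\cdot,\tau)\|_{L^\infty_x}\,\|u(\cdot,\tau)\|_{L^2}\,\|\nabla u(\cdot,\tau)\|_{L^2}$. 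Feeding this back into the identity and using $(a+b)^2\leq2a^2+2b^2$ (arranged so that the coefficient of $\|v\times\omega\|_{L^2}^4$ is exactly $\tfrac18$) produces, for each $\tau\in[0,t)$,
\[
\|S(\cdot,\tau)v(\cdot,\tau)\|_{L^2}^4\leq\tfrac18\|v\times\omega(\cdot,\tau)\|_{L^2}^4+C'\,\|\nabla v(\cdot,\tau)\|_{L^\infty_x}^2\,\|u(\cdot,\tau)\|_{L^2}^2\,\|\nabla u(\cdot,\tau)\|_{L^2}^2.
\]

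To close, I would integrate this in $\tau$ and invoke the energy equality, Proposition~\ref{EnergyEquality}, which supplies $\|u(\cdot,\tau)\|_{L^2}\leq\|u^0\|_{L^2}$ and $\int_0^t\|\nabla u(\cdot,\tau)\|_{L^2}^2\diff\tau\leq\tfrac12\|u^0\|_{L^2}^2$; pulling $\|\nabla v\|_{L^\infty_tL^\infty_x}^2$ out of the time integral then bounds $\int_0^t\|Sv\|_{L^2}^4\diff\tau$ by $\tfrac18\int_0^t\|v\times\omega\|_{L^2}^4\diff\tau+\tfrac{C'}{2}\|\nabla v\|_{L^\infty_tL^\infty_x}^2\|u^0\|_{L^2}^4$. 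Substituting this into the exponential bound from Corollary~\ref{StrainAllDirections} gives the stated inequality, the numerical value $2916$ being merely the bookkeeping constant that falls out of the two estimates above. The divergence statement is then immediate: were $\int_0^{T_{max}}\|v\times\omega\|_{L^2}^4\diff\tau$ finite, the right-hand side would stay bounded as $t\uparrow T_{max}$, so $\|u(\cdot,t)\|_{\dot{H}^1}$ would remain bounded, contradicting the breakdown of regularity at $T_{max}$ guaranteed by local well-posedness in $\dot{H}^1$.

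The main obstacle, and the only genuinely nontrivial point, is controlling $\|Sv\|_{L^2}^2-\tfrac14\|v\times\omega\|_{L^2}^2$ by $\|\nabla v\|_{L^\infty}$ times lower-order norms of $u$, rather than by a second factor of $\|\nabla u\|_{L^2}$, which would be useless as it is the very quantity being controlled. The potential danger is the term $v_jv_k\,\partial_i\partial_ku_i$, a true second derivative of $u$; the argument only works because this term is a derivative of $\nabla\cdot u$, so incompressibility must be exploited at exactly that spot. Once that cancellation is in hand, the remaining steps — the choice $q=2$, the polarization identity, H\"older, Young's inequality, and the energy equality — are all routine.
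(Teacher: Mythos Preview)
Your proposal is correct and follows essentially the same route as the paper: reduce to Corollary~\ref{StrainAllDirections} with $q=2$, establish the polarization identity $\|Sv\|_{L^2}^2=\tfrac14\|v\times\omega\|_{L^2}^2+\langle(\nabla u)v,(\nabla u)^{tr}v\rangle$, integrate by parts in $x_i$ and use $\nabla\cdot u=0$ to kill the second-derivative term, bound the remainder by $\|\nabla v\|_{L^\infty}\|u\|_{L^2}\|\nabla u\|_{L^2}$, and close with the energy equality. Your notation $Mv=(\nabla u)v$, $(v\cdot\nabla)u=(\nabla u)^{tr}v$ and your remark that $E\equiv0$ for constant $v$ are cosmetic additions, but the argument is identical to the paper's.
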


Chae and Choe's result, Theorem \ref{Vort2ComP}, states that for a solution of the Navier--Stokes equation to blowup in finite-time, the vorticity must become unbounded in every fixed plane. For a two dimensional flow in the $xy$ plane, the vorticity is entirely in the $z$ direction, so Theorem \ref{Vort2ComP} can be interpreted as requiring the blowup of vorticity to be globally three dimensional. Theorem \ref{MainTheoremIntro}, strengthens this result, by requiring the vorticity to blowup in every plane, where the plane may vary in space and time so long as the gradient of the vector orthogonal to the plane remains bounded, meaning the geometry of the blowup must be locally three dimensional.

This result highlights the strength of the strain formulation of the Navier--Stokes regularity problem. Using the strain formulation, we are able to get a stronger regularity criterion in terms of the vorticity than by working directly with the vorticity formation. Theorem \ref{MainTheoremIntro} follows as a fairly direct corollary of Corollary \ref{StrainAllDirections} using the relationship between the structure of the strain and vorticity that is imposed by the divergence free condition on the velocity. Because the regularity criterion on $\lambda_2^+$ gives us geometric information that is fundamentally anisotropic in that it does not involve any fixed direction, it allows us to go from component reduction regularity criterion that involve some fixed direction, and therefore guarantee regularity as long as the solution is not too far away from being globally two dimensional in some sense, to a component reduction type regularity criterion that only requires that the solution is not too far away from being locally two dimensional. The regularity criterion in terms of $\lambda_2^+$ first proven by Neustupa and Penel therefore encodes fundamentally anisotropic information about the structure of possible blowup solutions that does not require the imposition of any arbitrary direction. For this reason it is quite powerful.

We should also note that while Chae and Choe's result holds for $\frac{2}{p}+\frac{3}{q}=2, \frac{3}{2}<q<+\infty$, the proof of Theorem \ref{MainTheoremIntro} only holds with $q=2, p=4,$ because the Hilbert space structure of $L^2$ is essential to the proof. It is possible that Theorem \ref{MainTheoremIntro} can be generalized for all $\frac{3}{2}<q<+\infty,$ but this would require much more delicate analysis, and likely somewhat different techniques.

There are several other scale critical, component reduction type regularity criteria. 
For instance, Kukavica and Ziane \cite{Kukavica} showed that if a smooth solution of the Navier--Stokes equation blows up in finite-time $T_{max}<+\infty,$ and if 
$\frac{2}{p}+\frac{3}{q}=2,$ with $\frac{9}{4} \leq q \leq 3,$ then
\begin{equation}
\int_{0}^{T_{max}}\|\partial_3 u(\cdot,t)\|_{L^q}^p 
\diff t= + \infty.
\end{equation}
More recently, it was shown by Chemin and Zhang \cite{CheminZhang} and Chemin, Zhang, and Zhang \cite{CheminZhangZhang} that if $T_{max}<+\infty$ and $4<p<+\infty,$ then
\begin{equation}
\int_0^{T_{max}}\|u_3(\cdot,t)\|_{\dot{H}^{\frac{1}{2}+\frac{2}{p}}}^p \diff t=+\infty.
\end{equation}
Neustupa, Novotn\'{y}, and Penel had previously proven a regularity criterion on $u_3$ \cite{NeustupaOneComp}, which was the first result of this type, but their regularity criteria was not scale critical; it required $u_3$ to be in the subcritical space $L^p_t L^q_x,$ with $\frac{2}{p}+\frac{3}{q}=\frac{1}{2}.$ 
Like Theorem \ref{Vort2ComP} due to Chae and Choe, all of these results say that the flow must be regular as long as it is not too three dimensional in the sense that either $u_3$ or $\partial_3 u$ remains bounded in the appropriate scale critical space. The rotational invariance of the Navier--Stokes equation implies that there is nothing special about the particular direction $e_3,$ so $\partial_3 u$ and $u_3$ can be replaced by $v\cdot \nabla u$ and $u\cdot v$ respectively for any unit vector $v\in\mathbb{R}^3$, but the vector $v$ cannot be allowed to vary in space. These regularity criteria are therefore global anistotropic regularity criteria in that they require solutions which are globally anisotropic in some sense to remain smooth. Theorem \ref{MainTheoremIntro} is significantly stronger because it is a locally anisotropic regularity criterion which only requires solutions to be locally anisotropic to remain smooth. 

There is one recent regularity result that does involve local anisotropy. Kukavica, Rusin and Ziane proved that if $u$ is a suitable weak solution on $\mathbb{R}^3,$ and for some domain $D\subset \mathbb{R}^3\times \mathbb{R}^+,$ we have $\partial_3 u\in L^p_t L^q_x(D),$ with $\frac{2}{p}+\frac{3}{q}=2, \frac{9}{4}\leq q \leq \frac{5}{2},$ then $u$ is H\"older continuous on $D$ \cite{KukavicaLocal}.
This is a locally anisotropic regularity criterion, because we only require control on $\partial_3 u$ in some domain, not globally. This implies that the control on the anisotropy is local. This differs from Theorem \ref{MainTheoremIntro}, which involves an estimate over the whole space, and is therefore a global, not local regularity criterion, but a global regularity criterion that is locally anisotropic, on account of the direction being allowed to vary. We will discuss the relationship between Kukavica, Rusin, and Ziane's regularity criterion and Theorem \ref{MainTheoremIntro} further in section 3.

Another regularity criterion related to Theorem \ref{MainTheoremIntro} is the regularity criterion in terms of the vorticity direction proven by Constantin and Fefferman, which states that the direction of the vorticity must vary rapidly in regions where the vorticity is large in order for a solution of the Navier--Stokes equation to blowup \cite{ConstantinFefferman}.
This was a very important advance in that the rapid change of the vorticity direction in regions of large vorticity has long been at least heuristically understood as a fundamental property of turbulent flow, and Constantin and Fefferman's result shows that any Navier--Stokes blowup solution must be ``turbulent" in this sense.
Indeed, even Leonardo da Vinci's qualitative studies of turbulence in the early 1500s \cite{NatureDaVinci} show an understanding of the fundamental character of a rapid change in the orientation of vortices in turbulent regions of the fluid to the phenomenon of turbulence.

\begin{center}
\begin{figure*}[h]
    \centering
    \includegraphics[width=0.80\textwidth]{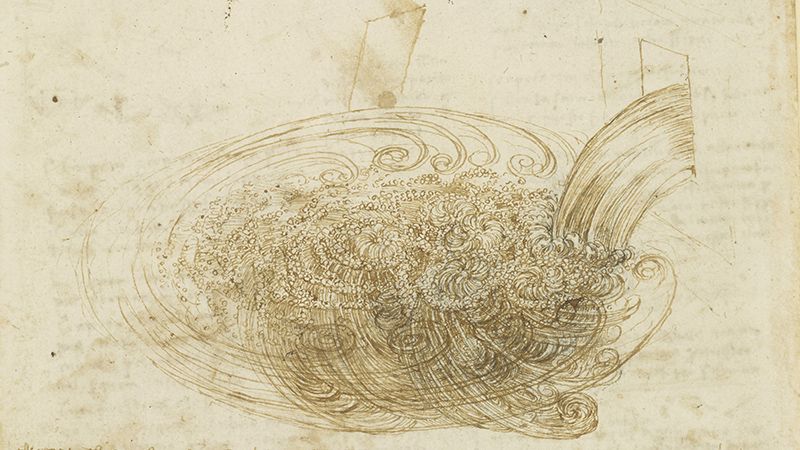}
    \caption{Leonardo da Vinci, \textit{Studies of Turbulent Water}. Royal Collection Trust.}
\end{figure*}
\end{center}

Kolmogorov would rigourously describe the phenomenon depicted purely heuristically by da Vinci with his celebrated theory of turbulent cascades \cite{Kolmogorov}, which remains central to our understanding of turbulence. Kolmogorov showed that turbulence should involve a transfer of energy from lower order frequency modes to higher, with a decay of the energy spectrum 
$E(\xi)\sim |\xi|^{-\frac{5}{3}},$ and also that turbulence is locally isotropic, with no preferred vorticity direction. Constantin and Fefferman were the first to connect the change in the orientation of vortices in turbulent regions to the Navier--Stokes regularity problem, which suggests that blowup for the Navier--Stokes equation is not only of purely mathematical interest, but could have significant physical implications---particularly if there is finite-time blowup---on our phenomenological understanding of turbulence.

Constantin and Fefferman's result was later generalized by Beir\~{a}o da Veiga and Berselli in \cite{daViegaBerselli}. In particular, as a corollary of their refined result that the vorticity direction must vary rapidly in regions with large vorticity, they proved a regularity criterion in terms of the gradient of the vorticity direction, $\nabla \frac{\omega}{|\omega|}.$
\begin{theorem} \label{daVeiga}
Suppose $u\in C\left([0,T_{max});H^1\right)$ is a mild solution of the Navier--Stokes equation that blows up in finite-time $T_{max}<+\infty.$ Then for all $\frac{2}{p}+\frac{3}{q}=\frac{1}{2}, 6\leq q\leq +\infty.$
\begin{equation}
\int_0^{T_{max}}\left\|\nabla \left( \frac{\omega}{|\omega|}
\right)(\cdot,t)\right\|_{L^q}^p \diff t=+\infty.
\end{equation}
\end{theorem}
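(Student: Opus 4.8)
My plan is to run the standard enstrophy estimate and extract the geometric depletion of the vortex-stretching term in the manner of Constantin and Fefferman \cite{ConstantinFefferman}. First I would test the vorticity equation \eqref{NavierVorticity} against $\omega$, which gives
\begin{equation*}
\frac{1}{2}\frac{\diff}{\diff t}\|\omega(\cdot,t)\|_{L^2}^2 + \|\nabla\omega(\cdot,t)\|_{L^2}^2 = \int_{\mathbb{R}^3}\omega\cdot S\omega \diff x = \int_{\mathbb{R}^3}|\omega|^2\,\xi^T S\xi \diff x,
\end{equation*}
where $\xi=\omega/|\omega|$ is the vorticity direction, defined and smooth on $\{\omega\neq 0\}$, the only set that contributes to the integral. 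Since $\|\omega(\cdot,t)\|_{L^2}^2=\|u(\cdot,t)\|_{\dot{H}^1}^2$, controlling the $\dot{H}^1$ norm reduces to controlling the stretching term $\int_{\mathbb{R}^3}|\omega|^2\,\xi^T S\xi\diff x$.

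Next I would use the representation of the stretching rate along $\xi$ obtained by recovering $u$ from $\omega$ through the Biot--Savart law: there is a bounded kernel $K$, vanishing whenever $\xi(x)=\pm\xi(x+y)$, with
\begin{equation*}
\xi(x)^T S(x)\xi(x) = \mathrm{P.V.}\!\int_{\mathbb{R}^3} K\!\left(\tfrac{y}{|y|},\xi(x),\xi(x+y)\right)\frac{|\omega(x+y)|}{|y|^3}\diff y.
\end{equation*}
Because $K$ vanishes to first order as $\xi(x+y)\to\pm\xi(x)$ one has $\bigl|K(\tfrac{y}{|y|},\xi(x),\xi(x+y))\bigr|\lesssim\bigl|\sin\angle(\xi(x),\xi(x+y))\bigr|\le|\xi(x)-\xi(x+y)|$, and estimating the latter by the line integral $|\xi(x)-\xi(x+y)|\le|y|\int_0^1|\nabla\xi(x+sy)|\diff s$ turns the stretching term into
\begin{equation*}
\left|\int_{\mathbb{R}^3}\omega\cdot S\omega\diff x\right|\lesssim\int_0^1\!\!\int_{\mathbb{R}^3}\!\!\int_{\mathbb{R}^3}\frac{|\nabla\xi(x+sy)|}{|y|^{2}}\,|\omega(x)|^2\,|\omega(x+y)|\diff y\diff x\diff s.
\end{equation*}

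I would then, for each fixed $s$, change variables $z=x+sy$ and treat the inner double integral as a bilinear fractional integral with kernel $|y|^{-2}$, bounding it by the Hardy--Littlewood--Sobolev inequality together with the Sobolev embedding $\dot{H}^1(\mathbb{R}^3)\hookrightarrow L^6(\mathbb{R}^3)$ and interpolation between $\|\omega\|_{L^2}$ and $\|\nabla\omega\|_{L^2}$, choosing the exponents so that $\|\nabla\omega\|_{L^2}$ appears with a power strictly less than $2$. This should yield, for $\frac{2}{p}+\frac{3}{q}=\frac{1}{2}$ with $6\le q\le+\infty$ (the constraint $q\ge 6$ being exactly what makes the $L^6$ endpoint of the embedding available),
\begin{equation*}
\left|\int_{\mathbb{R}^3}\omega\cdot S\omega\diff x\right|\le\frac{1}{2}\|\nabla\omega(\cdot,t)\|_{L^2}^2 + C\,\|\nabla\xi(\cdot,t)\|_{L^q}^{p}\,\|\omega(\cdot,t)\|_{L^2}^2.
\end{equation*}
Absorbing the dissipation term and applying Gr\"onwall's inequality gives $\|u(\cdot,t)\|_{\dot{H}^1}^2\le\|u^0\|_{\dot{H}^1}^2\exp\!\bigl(C\int_0^t\|\nabla\xi(\cdot,\tau)\|_{L^q}^p\diff\tau\bigr)$, and the stated blow-up criterion is the contrapositive, since finiteness of $\|u\|_{\dot{H}^1}$ up to $T_{max}$ is incompatible with $T_{max}<+\infty$.

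The hard part will be the third step --- controlling the singular double integral with $\nabla\xi$ only in $L^q$ rather than $L^\infty$. One cannot simply pull $\|\nabla\xi\|_{L^\infty}$ outside, so the argument must be set up as a genuine bilinear (or mixed-norm) estimate, e.g.\ by splitting into near- and far-field pieces or by a mixed-norm Hardy--Littlewood--Sobolev bound, and the exponent bookkeeping must be done carefully to land precisely on the curve $\frac{2}{p}+\frac{3}{q}=\frac{1}{2}$. The crucial structural input is that the kernel $K$ genuinely vanishes to first order in the angle between nearby vorticity directions, not merely that it is bounded; a crude bound $|\xi^T S\xi|\le|S|$ would only reproduce a Serrin-type criterion on $\nabla u$. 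I note also that one cannot shortcut this by applying Corollary \ref{StrainAllDirections} with $v=\omega/|\omega|$, because $|Sv|$ is not controlled by $|\nabla v|$ --- only the Rayleigh quotient $v^T Sv$ is, via the same Biot--Savart identity --- so the geometric cancellation has to be exploited inside the enstrophy estimate rather than inferred from the strain formulation alone.
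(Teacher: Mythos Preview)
The paper does not prove Theorem \ref{daVeiga}: it is quoted from Beir\~{a}o da Veiga and Berselli \cite{daViegaBerselli} as background, with no proof given here. So there is no ``paper's own proof'' to compare your proposal against.

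That said, your outline is essentially the approach of the original reference. The enstrophy identity, the Biot--Savart representation of $\xi^T S\xi$ with a kernel that vanishes to first order in the angle, the Lipschitz bound $|\xi(x)-\xi(x+y)|\le |y|\int_0^1|\nabla\xi(x+sy)|\diff s$, and the subsequent Hardy--Littlewood--Sobolev plus Sobolev/interpolation bookkeeping to land on $\frac{2}{p}+\frac{3}{q}=\frac{1}{2}$ with $q\ge 6$ are exactly the ingredients in \cite{daViegaBerselli}. You are also right that this cannot be obtained from Corollary \ref{StrainAllDirections} with $v=\omega/|\omega|$: only the scalar $\xi^T S\xi$, not $|S\xi|$, enjoys the geometric depletion, which is precisely the point made at the end of Section 3.

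One technical caveat worth flagging in your write-up: the line-integral bound on $\xi(x)-\xi(x+y)$ tacitly assumes $\xi$ is defined (i.e.\ $\omega\neq 0$) along the whole segment from $x$ to $x+y$, which need not hold globally. In \cite{daViegaBerselli} this is handled by working in the region where $|\omega|$ is large and using that the contribution from the complementary set is already controlled by the energy estimate; you should make that localization explicit when you fill in the details.
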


This is related to Theorem \ref{MainTheoremIntro}, because if we take $v=\frac{\omega}{|\omega|},$ then we find that Theorem \ref{MainTheoremIntro} implies if $T_{max}<+\infty,$ then
\begin{equation}
    \esssup_{0<t<T_{max}}\left\|\nabla\left(\frac{\omega}{|\omega|}
    \right)(\cdot,t)\right\|_{L^\infty}=+\infty.
\end{equation}
If we take the $p=4,q=+\infty$ case of Theorem \ref{daVeiga}, then we can see that if a solution of the Navier--Stokes equation blows up in finite-time $T_{max}<+\infty,$ then
\begin{equation}
    \int_0^{T_{max}}\left\|\nabla \left( \frac{\omega}{|\omega|}
    \right)(\cdot,t)\right\|_{L^\infty}^4 \diff t=+\infty.
\end{equation}

We will note that Beir\~{a}o da Veiga and Berselli's result is stronger than the special case of Theorem \ref{MainTheoremIntro} where we take $v=\frac{\omega}{|\omega|}$ because it not only specifies that $\nabla \frac{\omega}{|\omega|},$ must become unbounded as we approach blowup time, but also a rate of blowup---namely that its integral to the fourth power must go to infinity. We can see that the two extremal cases of Theorem \ref{MainTheoremIntro} are the case where $v$ is constant and hence $\nabla v=0,$ in which case we recover Theorem \ref{Vort2ComP} from Chae and Choe, and the case where $v=\frac{\omega}{|\omega|}$ and hence $v\times \omega=0,$ in which case we recover a weaker form of Theorem \ref{daVeiga}, from Beir\~{a}o da Veiga and Berselli. Therefore this result could be said to interpolate between these two results, although suboptimally at one end. We will discuss why this interpolation is suboptimal in section 3.

In section 2, we will prove the main result of the paper, Theorem \ref{MainTheoremIntro}. In section 3, we will further discuss the relationship between this result and the previous literature.

\section{Proof of the Main Theorem}

We will now prove Theorem \ref{MainTheoremIntro}, which is restated here for the reader's convenience.
\begin{theorem} \label{MainTheorem}
Suppose $u\in C\left([0,T_{max});H^1\left(\mathbb{R}^3\right)\right)$ is a mild solution of the Navier--Stokes equation.
Suppose $v \in L^\infty\left (\mathbb{R}^3 \times
[0,+\infty);\mathbb{R}^3\right ),$
with $|v(x,t)|=1$ almost everywhere, 
and suppose $\nabla v\in L^\infty_{loc}\left([0,+\infty);
L^\infty\left(\mathbb{R}^3\right)\right),$
Then for all for all $0<t<T_{max}$
\begin{equation}
\|u(\cdot,t)\|_{\dot{H}^1}^2 \leq 
\left\|u^0\right\|_{\dot{H}^1}^2
\exp \left (2916 C_2\left\|u^0\right\|_{L^2}^4\|\nabla v
\|_{L^\infty_t L^\infty_x}^2 +\frac{C_2}{8} 
\int_{0}^t\|v(\cdot,\tau)\times \omega(\cdot,\tau)
\|_{L^2}^4 \diff \tau \right ),
\end{equation}
where $C_2$ is taken is in Theorem \ref{StrainRegCrit} and Corollary \ref{StrainAllDirections}.
In particular, if $T_{max}<+\infty,$ then
\begin{equation}
    \int_{0}^{T_{max}}\|v(\cdot,t)\times \omega(\cdot,t)
\|_{L^2}^4 \diff t=+\infty. 
\end{equation}
\end{theorem}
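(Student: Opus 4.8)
The plan is to derive the theorem directly from Corollary~\ref{StrainAllDirections}, applied not to an auxiliary vector field but to $v$ itself. Taking $q=2,\ p=4$ there (so that $\tfrac{3}{q}+\tfrac{2}{p}=2$) gives, for all $0<t<T_{max}$,
\[
\|u(\cdot,t)\|_{\dot H^1}^2\ \le\ \left\|u^0\right\|_{\dot H^1}^2\exp\!\left(C_2\int_0^t\|S(\cdot,\tau)v(\cdot,\tau)\|_{L^2}^4\diff\tau\right),
\]
so the whole problem reduces to a pointwise-in-time bound on $\|Sv\|_{L^2}$ in terms of $\|v\times\omega\|_{L^2}$ together with quantities that the energy equality keeps under control.

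The starting point is the pointwise identity $Sv=(v\cdot\nabla)u+\tfrac12\,v\times\omega$, which follows from the decomposition $\nabla u=S+A$, from $Av=\tfrac12 v\times\omega$ (equation~\eqref{Av}), and from the vector identity $v\times(\nabla\times u)=\sum_j v_j\nabla u_j-(v\cdot\nabla)u$. Squaring and integrating,
\[
\|Sv\|_{L^2}^2=\|(v\cdot\nabla)u\|_{L^2}^2+\int_{\mathbb R^3}\big((v\cdot\nabla)u\big)\cdot(v\times\omega)\diff x+\tfrac14\|v\times\omega\|_{L^2}^2 .
\]
The crucial point is that the first two terms on the right combine into something of lower order. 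Indeed $(v\cdot\nabla)u+v\times\omega$ is the vector $w$ with $w_i=\sum_j v_j\partial_i u_j=\partial_i(u\cdot v)-\sum_j u_j\partial_i v_j$, i.e.\ $w=\nabla(u\cdot v)-(\nabla v)u$, so
\[
\|(v\cdot\nabla)u\|_{L^2}^2+\int_{\mathbb R^3}\big((v\cdot\nabla)u\big)\cdot(v\times\omega)\diff x=\int_{\mathbb R^3}\big((v\cdot\nabla)u\big)\cdot\nabla(u\cdot v)\diff x-\int_{\mathbb R^3}\big((v\cdot\nabla)u\big)\cdot(\nabla v)u\diff x .
\]
Integrating the first integral by parts and using $\nabla\cdot u=0$ to discard the term $\sum_k v_k\partial_k(\nabla\cdot u)$ turns it into $-\int(u\cdot v)\sum_{i,k}(\partial_i v_k)(\partial_k u_i)\diff x$; by Cauchy--Schwarz and $|v|=1$ the entire right-hand side is then bounded in absolute value by a constant times $\|\nabla v\|_{L^\infty_x}\|u\|_{L^2}\|\nabla u\|_{L^2}$. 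Carrying the constants through yields, for each $\tau\in[0,t]$,
\[
\|S(\cdot,\tau)v(\cdot,\tau)\|_{L^2}^2\ \le\ \tfrac14\|v(\cdot,\tau)\times\omega(\cdot,\tau)\|_{L^2}^2+54\,\|\nabla v(\cdot,\tau)\|_{L^\infty_x}\,\|u(\cdot,\tau)\|_{L^2}\,\|\nabla u(\cdot,\tau)\|_{L^2}.
\]

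From here the argument is routine. Squaring this inequality and splitting the cross term by Young's inequality so that the coefficient of $\|v\times\omega\|_{L^2}^4$ comes out to exactly $\tfrac18$ gives $\|Sv\|_{L^2}^4\le\tfrac18\|v\times\omega\|_{L^2}^4+5832\,\|\nabla v\|_{L^\infty_x}^2\|u\|_{L^2}^2\|\nabla u\|_{L^2}^2$. Integrating over $[0,t]$ and using the energy equality (Proposition~\ref{EnergyEquality}), which gives $\|u(\cdot,\tau)\|_{L^2}\le\|u^0\|_{L^2}$ and $\int_0^t\|\nabla u(\cdot,\tau)\|_{L^2}^2\diff\tau\le\tfrac12\|u^0\|_{L^2}^2$, we obtain
\[
C_2\int_0^t\|Sv\|_{L^2}^4\diff\tau\ \le\ 2916\,C_2\left\|u^0\right\|_{L^2}^4\|\nabla v\|_{L^\infty_tL^\infty_x}^2+\frac{C_2}{8}\int_0^t\|v\times\omega\|_{L^2}^4\diff\tau ,
\]
and substituting this into the displayed consequence of Corollary~\ref{StrainAllDirections} is precisely the claimed estimate. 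The blow-up statement is then immediate: if $T_{max}<+\infty$ while $\int_0^{T_{max}}\|v\times\omega\|_{L^2}^4\diff t<+\infty$, the exponent stays bounded as $t\uparrow T_{max}$, so $\|u(\cdot,t)\|_{\dot H^1}$ remains bounded on $[0,T_{max})$; but then, as recalled in the introduction, the solution would remain smooth up to $T_{max}$ and could be continued past it, contradicting the maximality of $T_{max}$.

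The only genuinely delicate step is the pointwise bound on $\|Sv\|_{L^2}$. The danger there is the term $\|(v\cdot\nabla)u\|_{L^2}^2$, which is of the size of $\|u\|_{\dot H^1}^2$ and so cannot be absorbed on its own; the argument hinges on noticing that it cancels, after a boundary-free integration by parts exploiting $\nabla\cdot u=0$, against the cross term $\int(v\cdot\nabla)u\cdot(v\times\omega)\diff x$, leaving only remainder terms controlled by $\|\nabla v\|_{L^\infty}$ times energy quantities. The remaining work is bookkeeping: tracking the terms produced by differentiating the products that involve $\nabla v$ (including the contribution of $\nabla\cdot v$, which need not vanish), arranging the numerical constants, and justifying the integrations by parts — this last being the reason the hypothesis is $u\in H^1$ rather than just $u\in\dot H^1$, so that $u\cdot v\in H^1$.
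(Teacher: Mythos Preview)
Your proof is correct and is essentially the same as the paper's. Your identity $Sv=(v\cdot\nabla)u+\tfrac12 v\times\omega$ and the observation that $\|(v\cdot\nabla)u\|_{L^2}^2+\int(v\cdot\nabla)u\cdot(v\times\omega)=\langle(\nabla u)^{tr}v,(\nabla u)v\rangle$ reproduce exactly the polarization identity $\|Sv\|_{L^2}^2=\tfrac14\|v\times\omega\|_{L^2}^2+\langle(\nabla u)v,(\nabla u)^{tr}v\rangle$ that the paper writes down directly, and your integration by parts on this cross term (splitting off $(\nabla v)u$ first, then moving $\partial_i$) yields term-for-term the same expression $-\sum_{i,j,k}\int u_j\,\partial_k u_i\,(v_k\partial_i v_j+v_j\partial_i v_k)$ and hence the same constant $54$.
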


\begin{proof}
We will prove that this theorem is a corollary of Corollary \ref{StrainAllDirections}. In particular, we will show that
\begin{equation}
    \int_0^t \|S(\cdot,\tau)v(\cdot,\tau)\|_{L^2}^4 
    \diff \tau \leq
    2916\|u^0\|_{L^2}^4\|\nabla v\|_{L^\infty_t L^\infty_x}
    + \frac{1}{8}\int_0^t
    \|v(\cdot,\tau)\times
    \omega(\cdot,\tau)\|_{L^2}^4 \diff \tau.
\end{equation}
First we will recall from \eqref{Av} that 
\begin{align}
    \frac{1}{2}v\times \omega&=A v\\
    &=\frac{1}{2}(\nabla u)v-\frac{1}{2}(\nabla u)^{tr} v.
\end{align}
Therefore we find that
\begin{equation} \label{step1}
    \frac{1}{4}\|v\times \omega\|_{L^2}^2=
    \frac{1}{4}\|(\nabla u)v\|_{L^2}^2+
    \frac{1}{4}\|(\nabla u)^{tr}v\|_{L^2}^2
    -\frac{1}{2}\left<(\nabla u)v,(\nabla u)^{tr}v\right>.
\end{equation}
Likewise we may compute that
\begin{equation}
    Sv=\frac{1}{2}(\nabla u)v+\frac{1}{2}(\nabla u)^{tr} v,
\end{equation}
and therefore
\begin{equation}\label{step2}
    \|Sv\|_{L^2}^2=
    \frac{1}{4}\|(\nabla u)v\|_{L^2}^2+
    \frac{1}{4}\|(\nabla u)^{tr}v\|_{L^2}^2
    +\frac{1}{2}\left<(\nabla u)v,(\nabla u)^{tr}v\right>.
\end{equation}
Putting together \eqref{step1} and \eqref{step2}, we find that
\begin{equation}
    \|Sv\|_{L^2}^2=\frac{1}{4}\|v\times \omega\|_{L^2}^2+
    \left<(\nabla u)v,(\nabla u)^{tr}v\right>.
\end{equation}
We know that for all $a,b\geq 0, (a+b)^2\leq 2a^2+2b^2,$
so we find that
\begin{equation}
    \|Sv\|_{L^2}^4=\frac{1}{8}\|v\times \omega\|_{L^2}^4+
    2\left(\left<(\nabla u)v,(\nabla u)^{tr}v\right>\right)^2.
\end{equation}

Now we will need to estimate 
$\left<(\nabla u)v,(\nabla u)^{tr}v\right>.$
By definition we have
\begin{equation}
    \left<(\nabla u)v,(\nabla u)^{tr}v\right>=
    \sum_{i,j,k=1}^3\int_{\mathbb{R}^3} \partial_i u_j v_j \partial_k u_i v_k \diff x.
\end{equation}
Integrating by parts with respect to $x_i$,
and applying the fact that 
$\nabla \cdot u=\sum_{i=1}^3\partial_i u_i=0,$
we find that
\begin{align}
    \left<(\nabla u)v,(\nabla u)^{tr}v\right>&=
    - \sum_{i,j,k=1}^3\int_{\mathbb{R}^3} u_j \partial_k u_i 
    (v_k\partial_i v_j+v_j \partial_i v_k) \diff x\\
    &\leq 
    \sum_{i,j,k=1}^3\int_{\mathbb{R}^3} 2|u|
    |\nabla u| |v| |\nabla v|\\
    &=
    54 \int_{\mathbb{R}^3} |u||\nabla u||\nabla v|\\
    &\leq 
    54 \|u\|_{L^2}\|\nabla u\|_{L^2}\|\nabla v\|_{L^\infty}.
\end{align}
This estimate implies that
\begin{equation}
    \|Sv\|_{L^2}^4\leq \frac{1}{8}\|v\times \omega\|_{L^2}^4
    +5832 \|u\|_{L^2}^2\|\nabla u\|_{L^2}^2
    \|\nabla v\|_{L^\infty}^2.
\end{equation}
Applying the energy equality and our hypothesis on 
$\nabla v,$ we find that
\begin{align}
    \int_0^t \|u(\cdot,\tau)\|_{L^2}^2
    \|\nabla u(\cdot,\tau)\|_{L^2}^2
    \|\nabla v(\cdot,\tau)\|_{L^\infty}^2 \diff\tau 
    &\leq
    \int_0^t \left\|u^0\right\|_{L^2}^2
    \|\nabla u(\cdot,\tau)\|_{L^2}^2
    \|\nabla v\|_{L_t^\infty L^\infty_x}^2 \diff\tau \\
    &=
    \left\|u^0\right\|_{L^2}^2 \|\nabla v\|_{L_t^\infty L^\infty_x}^2
    \int_0^t \|\nabla u(\cdot,\tau)\|_{L^2}^2 \diff\tau \\
    &\leq 
    \frac{1}{2}\left\|u^0\right\|_{L^2}^4 
    \|\nabla v\|_{L_t^\infty L^\infty_x}^2.
\end{align}
Therefore we may conclude that
\begin{equation}
    \int_0^t\|S(\cdot,\tau)v(\cdot,\tau)\|_{L^2}^4 \diff\tau
    \leq 2916 \left\|u^0\right\|_{L^2}^4 
    \|\nabla v\|_{L_t^\infty L^\infty_x}^2+
    \frac{1}{8}\int_0^t\|v(\cdot,\tau)\times 
    \omega(\cdot,\tau)\|_{L^2}^4 \diff\tau
\end{equation}
Applying Corollary \ref{StrainAllDirections}, this completes the proof.
\end{proof}

\section{Relationship to the previous literature}

In this section, we will further discuss the relationship between the results in this paper and the previous literature. In particular, we will show that Theorem \ref{StrainAllDirections}, the regularity criterion on $Sv,$ is equivalent to Chae and Choe's regularity criterion on $v\times \omega$ in the special case where we take $v\in\mathbb{R}^3$ to be a fixed unit vector. In order to do this we will first need to introduce the Helmholtz decomposition of vector fields in $L^q, 1<q<+\infty$ into gradients and divergence free vector fields.

\begin{proposition} \label{Helmholtz}
Suppose $1<q<+\infty.$ For all $v\in 
L^q(\mathbb{R}^3;\mathbb{R}^3)$ there exists a unique $u\in L^q(\mathbb{R}^3;\mathbb{R}^3),$ $\nabla \cdot u=0$ and $\nabla f \in L^q(\mathbb{R}^3;\mathbb{R}^3)$ such that $v=u+\nabla f.$ Note because we do not have any assumptions of higher regularity, we will say that $\nabla \cdot u=0,$ if for all $\phi \in C_c^\infty(\mathbb{R}^3)$
\begin{equation}
    \int_{\mathbb{R}^3}u \cdot \nabla \phi=0,
\end{equation}
and we will say that $\nabla f$ is a gradient if for all $w\in C_c^\infty(\mathbb{R}^3;\mathbb{R}^3), \nabla \cdot w=0,$ we have
\begin{equation}
    \int_{\mathbb{R}^3}\nabla f \cdot w=0.
\end{equation}
Furthermore there exists $C_q\geq 1$ depending only on $q,$ such that 
\begin{equation}
    \|u\|_{L^q}\leq C_q \|v\|_{L^q},
\end{equation}
and
\begin{equation}
    \|\nabla f\|_{L^q} \leq C_q \|v\|_{L^q}.
\end{equation}

Using this decomposition we will define the projection onto the space of divergence free vector fields $P_{df}: L^q \to L^q$ to be
\begin{equation}
    P_{df}(v)=u,
\end{equation}
and the projection onto the space of gradients $P_{gr}: L^q \to L^q$ to be
\begin{equation}
    P_{gr}(v)=\nabla f.
\end{equation}
\end{proposition}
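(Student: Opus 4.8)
The plan is to construct the decomposition via the Leray--Hopf projector, built from the double Riesz transforms, and to read off the $L^q$ bounds from Calder\'on--Zygmund theory. Formally one wants $\nabla f = \nabla(-\Delta)^{-1}(\divr v)$ and $u = v - \nabla f$; on the Fourier side this says that $\widehat{\nabla f}(\xi)$ is the orthogonal projection of $\hat v(\xi)$ onto the line $\mathbb{R}\xi$, carried by the matrix symbol $m_{jk}(\xi) = \xi_j\xi_k/|\xi|^2$, while $\hat u(\xi)$ is the projection onto $\xi^{\perp}$, carried by $\delta_{jk} - m_{jk}(\xi)$. So the first step is to define operators $P_{gr}$ and $P_{df} = I - P_{gr}$ by these Fourier multipliers on the Schwartz class, where they are unambiguous.

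The second step is boundedness on $L^q$. Each $m_{jk}$ is homogeneous of degree $0$ and smooth on $\mathbb{R}^3\setminus\{0\}$, hence satisfies the Mikhlin--H\"ormander bounds $|\partial^\alpha m_{jk}(\xi)| \le C_\alpha |\xi|^{-|\alpha|}$; therefore $P_{gr}$, and with it $P_{df}$, extends to a bounded operator on $L^q(\mathbb{R}^3;\mathbb{R}^3)$ for every $1<q<\infty$, and its operator norm provides the constant $C_q$ (automatically $\ge 1$, being the norm of a nonzero idempotent) in both inequalities. This is the single genuinely analytic input, and I would just invoke the classical Calder\'on--Zygmund / Mikhlin multiplier theorem rather than reprove it; the remaining steps are verification. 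By density, $P_{gr}$ and $P_{df}$ are then defined on all of $L^q$.

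Third, I would check, first for $v$ in the Schwartz class and then in the limit, that $P_{df}v$ is divergence free and $P_{gr}v$ is a gradient in the weak senses demanded by the Proposition. Indeed $\widehat{P_{gr}v}(\xi) = \xi\,\lambda(\xi)$ with $\lambda(\xi) = \xi\cdot\hat v(\xi)/|\xi|^2$ scalar, so it pairs to zero against the Fourier transform of any divergence free test field $w$ (for which $\xi\cdot\hat w(\xi)=0$), giving $\int P_{gr}v\cdot w = 0$; and $\xi\cdot\widehat{P_{df}v}(\xi) = 0$ yields $\int P_{df}v\cdot\nabla\phi = 0$ for all $\phi\in C_c^\infty$; finally $v = P_{df}v + P_{gr}v$ from $(\delta_{jk}-m_{jk}) + m_{jk} = \delta_{jk}$. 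Since all three identities are tested against fixed Schwartz (or $C_c^\infty$) functions, they are stable under $L^q$ convergence, so they persist for every $v\in L^q$; setting $u = P_{df}v$ and $\nabla f = P_{gr}v$ finishes the existence part and the stated estimates.

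Finally, for uniqueness, suppose $u_1+\nabla f_1 = u_2+\nabla f_2$ and set $g = u_1-u_2 = \nabla f_2-\nabla f_1 \in L^q$. Then $g$ is divergence free; testing the ``gradient'' property of $g$ against the divergence free fields $w = \nabla\times(\zeta e_m)$ with $\zeta\in C_c^\infty$ shows $\nabla\times g = 0$ distributionally, whence $\Delta g_i = \partial_i(\divr g) = 0$, so each component of $g$ is harmonic and, by Weyl's lemma, smooth. The mean value property and H\"older's inequality give $|g(x)| \le C R^{-3/q}\|g\|_{L^q} \to 0$ as $R\to\infty$, so $g\equiv 0$ and the decomposition is unique. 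The step I expect to require the most care is the passage from the Schwartz class to general $v\in L^q$: the multiplier description of $P_{gr}$ is literal only on nice functions, so one must confirm that the weak divergence free and gradient conditions survive the limit --- which they do precisely because they are closed under $L^q$ convergence.
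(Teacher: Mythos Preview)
Your argument is correct and is the standard route to the Helmholtz decomposition on $L^q(\mathbb{R}^3)$: build the Leray projector from the matrix multiplier $\xi_j\xi_k/|\xi|^2$, invoke the Mikhlin--H\"ormander theorem for the $L^q$ bounds, verify the weak divergence free and gradient conditions by duality on the Fourier side, and deduce uniqueness from the fact that an $L^q$ vector field which is simultaneously divergence free and a gradient is harmonic and hence vanishes by the mean value property. Each step is sound; in particular your remark that the norm of a nonzero idempotent is at least $1$ correctly justifies $C_q\ge 1$, and your care about passing the weak identities through the $L^q$ limit is exactly the right place to be careful.

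The paper, however, gives no proof of this proposition at all: it is recorded as a classical fact and then immediately used to prove Proposition~\ref{NormEquiv}. So there is no ``paper's own proof'' to compare against; you have supplied what the paper omits. Your writeup would serve perfectly well as the missing justification, and the approach you chose (Riesz transforms plus Calder\'on--Zygmund) is precisely what the paper has in mind when it later remarks that the proof of Proposition~\ref{NormEquiv} ``relies on the $L^q$ boundedness of the Helmholtz decomposition, which in turn relies on the $L^q$ boundedness of the Riesz transform.''
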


We can now show that for any fixed unit vector $v\in\mathbb{R}^3$
$\|Sv\|_{L^q}$ and $\|v\times\omega\|_{L^q}$ are equivalent norms.

\begin{proposition} \label{NormEquiv}
Suppose $\nabla \cdot u=0,$ and $\nabla u\in L^q$ Then for all unit vectors $v\in\mathbb{R}^3,$ and for all $1<q<+\infty$
\begin{align}
    \|(v\cdot \nabla) u\|_{L^q}&\leq C_q \|v\times \omega\|_{L^q},\\
    \|(v\cdot \nabla) u\|_{L^q}&\leq C_q \|2 S v\|_{L^q},\\
    \|\nabla (u\cdot v)\|_{L^q}&\leq C_q \|v\times \omega\|_{L^q},\\
    \|\nabla (u\cdot v)\|_{L^q}&\leq C_q \|2 S v\|_{L^q},
\end{align}
where $C_q$ is taken as in Proposition \ref{Helmholtz}. 
Furthermore,
\begin{equation}
    \frac{1}{2C_q}\|v\times \omega\|_{L^q}\leq\|2Sv\|_{L^q}
    \leq 2C_q \|v\times \omega\|_{L^q}.
\end{equation}
\end{proposition}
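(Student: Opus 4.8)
The plan is to obtain all the stated estimates at once from a single algebraic identity fed into the Helmholtz decomposition of Proposition~\ref{Helmholtz}. Fix the constant unit vector $v$ and set
\begin{equation*}
P=\nabla(u\cdot v),\qquad Q=(v\cdot\nabla)u .
\end{equation*}
Since $v$ is constant, a short computation in coordinates gives, for each $i$,
\begin{equation*}
(P+Q)_i=\sum_{k=1}^3 v_k\bigl(\partial_i u_k+\partial_k u_i\bigr)=2(Sv)_i,
\qquad
(P-Q)_i=\sum_{k=1}^3 v_k\bigl(\partial_i u_k-\partial_k u_i\bigr)=2(Av)_i,
\end{equation*}
so by \eqref{Av} we obtain the identities $P+Q=2Sv$ and $P-Q=v\times\omega$. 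Moreover $P$ is a gradient by construction, while $Q$ is divergence free because $\nabla\cdot Q=(v\cdot\nabla)(\nabla\cdot u)=0$; both facts hold in the weak sense required by Proposition~\ref{Helmholtz}, as one checks by integrating by parts against $C_c^\infty$ test functions (legitimate since $u\in\dot H^1\hookrightarrow L^6$ in the setting under consideration).

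Next I would apply $P_{gr}$ and $P_{df}$ to these two identities. By uniqueness of the Helmholtz decomposition, $P_{gr}$ fixes $P$ and annihilates $Q$, while $P_{df}$ fixes $Q$ and annihilates $P$; hence
\begin{equation*}
P_{gr}(v\times\omega)=P=P_{gr}(2Sv),\qquad
P_{df}(v\times\omega)=-Q,\qquad P_{df}(2Sv)=Q .
\end{equation*}
Combining these with the operator bounds $\|P_{gr}w\|_{L^q}\le C_q\|w\|_{L^q}$ and $\|P_{df}w\|_{L^q}\le C_q\|w\|_{L^q}$ from Proposition~\ref{Helmholtz} gives the first four inequalities directly; for instance $\|(v\cdot\nabla)u\|_{L^q}=\|Q\|_{L^q}=\|P_{df}(v\times\omega)\|_{L^q}\le C_q\|v\times\omega\|_{L^q}$ and $\|\nabla(u\cdot v)\|_{L^q}=\|P\|_{L^q}=\|P_{gr}(2Sv)\|_{L^q}\le C_q\|2Sv\|_{L^q}$, and the remaining two are obtained by selecting the other projection.

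For the two-sided estimate I would run the identities in the opposite direction. From $2Sv=P+Q$, $v\times\omega=P-Q$ and the decompositions above,
\begin{equation*}
2Sv=P_{gr}(v\times\omega)-P_{df}(v\times\omega),\qquad
v\times\omega=P_{gr}(2Sv)-P_{df}(2Sv),
\end{equation*}
so the triangle inequality and the operator bounds yield $\|2Sv\|_{L^q}\le 2C_q\|v\times\omega\|_{L^q}$ and $\|v\times\omega\|_{L^q}\le 2C_q\|2Sv\|_{L^q}$, which is exactly
\begin{equation*}
\frac{1}{2C_q}\|v\times\omega\|_{L^q}\le\|2Sv\|_{L^q}\le 2C_q\|v\times\omega\|_{L^q}.
\end{equation*}

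There is no serious obstacle here: once one notices that $\{P+Q,\,P-Q\}=\{2Sv,\,v\times\omega\}$ with $P$ a gradient and $Q$ solenoidal, the whole proposition reduces to applying Proposition~\ref{Helmholtz}. The only points demanding a little care are keeping the signs straight in the coordinate computation (so that $P-Q$, and not $Q-P$, equals $v\times\omega$) and confirming that $P$ and $Q$ genuinely lie in the gradient and divergence-free subspaces in the precise dual sense used by Proposition~\ref{Helmholtz}, which the local integrability of $u$ makes routine. One could alternatively reduce to $v=e_3$ using rotational invariance and the fact that $P_{df},P_{gr}$ commute with rotations, but the direct computation above makes that detour unnecessary.
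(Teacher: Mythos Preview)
Your argument is correct and follows essentially the same route as the paper: both proofs recognize that $2Sv$ and $v\times\omega$ are, respectively, the sum and difference of the gradient $\nabla(u\cdot v)$ and the divergence-free field $(v\cdot\nabla)u$, and then read off all the inequalities from the boundedness of the Helmholtz projections in Proposition~\ref{Helmholtz}. The only cosmetic difference is that the paper first reduces to $v=e_3$ via rotational invariance and writes the identities as $e_3\times\omega=\partial_3 u-\nabla u_3$ and $2Se_3=\partial_3 u+\nabla u_3$ (up to a harmless sign), whereas you carry out the computation for a general constant unit vector directly---as you yourself note in your closing remark.
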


\begin{proof}
First we will observe that the rotational invariance of the space of divergence free vector fields means that we can take $v=e_3$ without loss of generality. It is a simple calculation to see that
\begin{equation}
    e_3\times \omega= \partial_3 u-\nabla u_3.
\end{equation}
Likewise we can see that 
\begin{equation}
    2S e_3= \partial_3 u+\nabla u_3.
\end{equation}
Clearly $\nabla u_3$ is a gradient, and we can also see that $\partial_3 u$ is divergence free because 
\begin{align}
    \nabla \cdot \partial_3 u&=
    \partial_3 \nabla \cdot u\\
    &=0.
\end{align}
This implies that
\begin{align}
    \partial_3 u&= P_{df}(e_3\times \omega),\\
    \partial_3 u&= P_{df}(2S e_3).
\end{align}
Applying Proposition \ref{Helmholtz} we can therefore conclude that 
\begin{align}
    \|\partial_3 u\|_{L^q}&\leq C_q \|e_3\times \omega\|_{L^q},\\
    \|\partial_3 u\|_{L^q}&\leq C_q\|2S e_3\|_{L^q}.
\end{align}
Likewise we can observe that
\begin{align}
    \nabla u_3&= -P_{gr}(e_3\times \omega),\\
    \nabla u_3&= P_{gr}(2S e_3),
\end{align}
and apply Proposition \ref{Helmholtz} to find that
\begin{align}
    \|\nabla u_3\|_{L^q}&\leq C_q \|e_3\times \omega\|_{L^q},\\
    \|\nabla u_3\|_{L^q}&\leq C_q\|2S e_3\|_{L^q}.
\end{align}
We will now finish the proof by applying the triangle inequality and concluding that
\begin{align}
    \|e_3 \times\omega \|_{L^q}&=
    \|\partial_3 u-\nabla u_3\|_{L^q}\\
    &\leq \|\partial_3 u\|_{L^q}+\|\nabla u_3\|_{L^q}\\
    &\leq 2C_q \|2Se_3\|_{L^q},
\end{align}
and that
\begin{align}
    \|2Se_3\|_{L^q}&=
    \|\partial_3 u+\nabla u_3\|_{L^q}\\
    &\leq \|\partial_3 u\|_{L^q}+\|\nabla u_3\|_{L^q}\\
    &\leq 2C_q \|e_3 \times\omega \|_{L^q}.
\end{align}
This completes the proof.
\end{proof}

Proposition \ref{NormEquiv} implies that in the range $\frac{9}{4}\leq q\leq 3,$ Kukavica and Ziane's regularity criterion on $\partial_3 u$ implies Chae and Choe's regularity criterion on $e_3 \times \omega$ because
\begin{equation}
    \|\partial_3 u\|_{L^q}\leq 
    C_q \|e_3 \times \omega\|_{L^q}.
\end{equation}
Likewise, Chemin, Zhang, and Zhang's regularity criterion on $u_3$ implies Chae and Choe's regularity criterion on $e_3\times \omega$ when $4<p<+\infty.$
We can see this by applying the fractional Sobolev embedding 
$ W^{1,q}\left (\mathbb{R}^3 \right ) \hookrightarrow
\dot{H}^{\frac{1}{2}+\frac{2}{p}}\left (\mathbb{R}^3 \right )$
when $\frac{2}{p}+\frac{3}{q}=2,$ and Proposition \ref{NormEquiv} to find
\begin{align}
\|u_3\|_{\dot{H}^{\frac{1}{2}+\frac{2}{p}}}
&\leq 
C_q\|\nabla u_3\|_{L^q}\\
&\leq
\Tilde{C}_q \|e_3\times \omega\|_{L^q}
\end{align}

We will also note that Theorem \ref{MainTheoremIntro} differs in a fundamental way from the locally anisotropic regularity criterion proven by Kukavica, Rusin, and Ziane in \cite{KukavicaLocal}. This is true in particular because while 
\begin{equation}
    \|\partial_3 u\|_{L^q\left(\mathbb{R}^3\right)} \leq C_q
    \|e_3 \times \omega\|_{L^q\left(\mathbb{R}^3\right)},
\end{equation}
no such inequality holds if we are not on the whole space. For arbitrary $\nabla u\in L^q, \nabla \cdot u=0,$ and arbitrary domains $\Omega\subset \mathbb{R}^3,$ there is no inequality of the form
\begin{equation}
    \|\partial_3 u\|_{L^q\left(\Omega\right)} \leq C_q
    \|e_3 \times \omega\|_{L^q\left(\Omega\right)}.
\end{equation}
The proof of Proposition \ref{NormEquiv} relies on the $L^q$ boundedness of the Helmholtz decomoposition, which in turn relies on the $L^q$ boundedness of the Riesz transform. This is something which only holds globally, not locally, because the Riesz transform is a nonlocal operator. $e_3 \times \omega$ controls $\partial_3 u$ globally in $L^q,$ but not locally in $L^q.$ This means that while Kukavica and Ziane's regularity criterion for $\partial_3 u \in L^p_t L^q_x\left(\mathbb{R}^3\right)$ implies Chae and Choe's regularity criterion for $e_3 \times \omega \in L^p_t L^q_x\left(\mathbb{R}^3\right)$ for $\frac{9}{4}\leq q \leq 3,$ the local anisotropic regularity criterion with $\partial_3 u\in L^p_t L^q_x\left(D\right)$ and the locally anisotropic regularity criterion on $v\times \omega,$ where $v$ is allowed to vary do not have this same relationship.

Finally, we will note that when we set $v=\frac{\omega}{|\omega|},$ Theorem \ref{MainTheoremIntro} requires that if our solution of the Navier--Stokes equation blows up in finite-time $T_{max}<+\infty,$ then
\begin{equation}
    \esssup_{(x,t)\in\mathbb{R}^3\times [0,T_{max})}
    \left|\nabla \left(\frac{\omega}{|\omega|}
    \right)(x,t)\right|=+\infty.
\end{equation}
However, as we discussed in the introduction, Beir\~{a}o da Veiga and Berselli proved the stronger result that under these conditions
\begin{equation}
    \int_0^{T_{max}}\left\|\nabla\left(
    \frac{\omega}{|\omega|}\right)(\cdot,t)
    \right\|_{L^\infty}^4 \diff t=+\infty.
\end{equation}
For this reason, it seems like it ought to be possible to relax the condition in Theorem \ref{MainTheoremIntro} from $\nabla v\in L^\infty_t L^\infty_x$  to $\nabla v\in L^4_t L^\infty_x.$ The methods used to prove Theorem \ref{MainTheoremIntro} would at first glance suggest this would be possible, but the difficulty is that when integrating by parts we cannot get all of the derivatives off of $u$ and onto $v,$ which is what leads to the sub-optimal bound.

\begin{remark}
Suppose for all $u\in H^1, \nabla \cdot u=0, v\in L^\infty, |v(x)|=1$ almost everywhere $x\in\mathbb{R}^3$ we had a bound of the form
\begin{align}
    |\left<(\nabla u) v, (\nabla u)^{tr}v\right>|
    &\leq \label{DesiredIneq}
    C\int_{\mathbb{R}^3} |u|^2|\nabla v|^2\\
    &\leq
    C \|u\|_{L^2}^2\|\nabla v\|_{L^\infty}^2.
\end{align}
Then using the energy inequality we could conclude that
\begin{align}
    \int_0^t\left|\left<(\nabla u) v, 
    (\nabla u)^{tr}v\right>(\tau)\right|^2\diff \tau
    &\leq
    C^2\int_0^t\|u(\cdot,\tau)\|_{L^2}^4
    \|\nabla v(\cdot,\tau)\|_{L^\infty}^4 \diff \tau\\
    &\leq 
    C^2 \left\|u^0\right\|_{L^2}^4\int_0^t
    \|\nabla v(\cdot,\tau)\|_{L^\infty}^4\diff \tau,
\end{align}
and we could relax the requirement in Theorem \ref{MainTheoremIntro} to $\nabla v\in L^4_t L^\infty_x$ using precisely the same proof.
The difficulty is that, as much as at first glance it would appear that an inequality of the form \eqref{DesiredIneq} should hold using integration by parts, after multiple attempts to do so by the author it does not appear possible to push all the derivatives off of $u$ and onto $v$ in the manner desired. The asymmetry from the transpose, $(\nabla u)^{tr},$ does not seem to allow this. It is definitely possible, perhaps even likely, that the condition can be relaxed to $\nabla v\in L^4_t L^\infty_x,$ so that the interpolation between Chae and Choe's regularity criterion and Beir\~{a}o da Veiga and Berselli's will be optimal, but it would require a fundamentally different proof, because it will not follow as a corollary the regularity criterion on $\lambda_2^+$ in Theorem \ref{StrainRegCrit}.
\end{remark}

\bibliographystyle{plain}
\bibliography{Bib}

\begin{thebibliography}{10}

\bibitem{daViegaBerselli}
Hugo Beir\~{a}o~da Veiga and Luigi~C. Berselli.
\newblock On the regularizing effect of the vorticity direction in
  incompressible viscous flows.
\newblock {\em Differential Integral Equations}, 15(3):345--356, 2002.

\bibitem{ChaeChoe}
Dongho Chae and Hi-Jun Choe.
\newblock Regularity of solutions to the {N}avier-{S}tokes equation.
\newblock {\em Electron. J. Differential Equations}, pages No. 05, 7, 1999.

\bibitem{CheminZhang}
Jean-Yves Chemin and Ping Zhang.
\newblock On the critical one component regularity for 3-{D} {N}avier-{S}tokes
  systems.
\newblock {\em Ann. Sci. \'{E}c. Norm. Sup\'{e}r. (4)}, 49(1):131--167, 2016.

\bibitem{CheminZhangZhang}
Jean-Yves Chemin, Ping Zhang, and Zhifei Zhang.
\newblock On the critical one component regularity for 3-{D} {N}avier-{S}tokes
  system: general case.
\newblock {\em Arch. Ration. Mech. Anal.}, 224(3):871--905, 2017.

\bibitem{ConstantinFefferman}
Peter Constantin and Charles Fefferman.
\newblock Direction of vorticity and the problem of global regularity for the
  {N}avier-{S}tokes equations.
\newblock {\em Indiana Univ. Math. J.}, 42(3):775--789, 1993.

\bibitem{SverakSereginL3}
L.~Escauriaza, G.~A. Seregin, and V.~\v{S}ver\'ak.
\newblock {$L_{3,\infty}$}-solutions of {N}avier-{S}tokes equations and
  backward uniqueness.
\newblock {\em Uspekhi Mat. Nauk}, 58(2(350)):3--44, 2003.

\bibitem{KatoFujita}
Hiroshi Fujita and Tosio Kato.
\newblock On the {N}avier-{S}tokes initial value problem. {I}.
\newblock {\em Arch. Rational Mech. Anal.}, 16:269--315, 1964.

\bibitem{GuoBesov}
Zhengguang Guo, Petr Ku\v{c}era, and Zden\v{e}k Skal\'{a}k.
\newblock Regularity criterion for solutions to the {N}avier-{S}tokes equations
  in the whole 3{D} space based on two vorticity components.
\newblock {\em J. Math. Anal. Appl.}, 458(1):755--766, 2018.

\bibitem{NatureDaVinci}
Martin Kemp.
\newblock Leonardo da vinci’s laboratory: studies in flow.
\newblock {\em Nature}, 571, 2019.

\bibitem{Kolmogorov}
A.~Kolmogorov.
\newblock The local structure of turbulence in incompressible viscous fluid for
  very large {R}eynold's numbers.
\newblock {\em C. R. (Doklady) Acad. Sci. URSS (N.S.)}, 30:301--305, 1941.

\bibitem{KukavicaLocal}
Igor Kukavica, Walter Rusin, and Mohammed Ziane.
\newblock Localized anisotropic regularity conditions for the {N}avier-{S}tokes
  equations.
\newblock {\em J. Nonlinear Sci.}, 27(6):1725--1742, 2017.

\bibitem{Kukavica}
Igor Kukavica and Mohammed Ziane.
\newblock Navier-{S}tokes equations with regularity in one direction.
\newblock {\em J. Math. Phys.}, 48(6):065203, 10, 2007.

\bibitem{Ladyzhenskaya}
O.~A. Ladyzhenskaya.
\newblock Uniqueness and smoothness of generalized solutions of
  {N}avier-{S}tokes equations.
\newblock {\em Zap. Nau\v cn. Sem. Leningrad. Otdel. Mat. Inst. Steklov.
  (LOMI)}, 5:169--185, 1967.

\bibitem{NS21}
Pierre~Gilles Lemari\'e-Rieusset.
\newblock {\em The {N}avier-{S}tokes problem in the 21st century}.
\newblock CRC Press, Boca Raton, FL, 2016.

\bibitem{Leray}
Jean Leray.
\newblock Sur le mouvement d'un liquide visqueux emplissant l'espace.
\newblock {\em Acta Math.}, 63(1):193--248, 1934.

\bibitem{MajdaBertozzi}
Andrew~J. Majda and Andrea~L. Bertozzi.
\newblock {\em Vorticity and incompressible flow}, volume~27 of {\em Cambridge
  Texts in Applied Mathematics}.
\newblock Cambridge University Press, Cambridge, 2002.

\bibitem{MillerStrain}
Evan Miller.
\newblock A regularity criterion for the {N}avier--{S}tokes equation involving
  only the middle eigenvalue of the strain tensor.
\newblock {\em Arch. Rational Mech. Anal.}, (to appear).

\bibitem{NeustupaOneComp}
Jir\'{\i} Neustupa, Anton\'{\i}n Novotn\'{y}, and Patrick Penel.
\newblock An interior regularity of a weak solution to the {N}avier-{S}tokes
  equations in dependence on one component of velocity.
\newblock In {\em Topics in mathematical fluid mechanics}, volume~10 of {\em
  Quad. Mat.}, pages 163--183. Dept. Math., Seconda Univ. Napoli, Caserta,
  2002.

\bibitem{NeuPen1}
Ji\v{r}\'{\i} Neustupa and Patrick Penel.
\newblock Anisotropic and geometric criteria for interior regularity of weak
  solutions to the 3{D} {N}avier-{S}tokes equations.
\newblock In {\em Mathematical fluid mechanics}, Adv. Math. Fluid Mech., pages
  237--265. Birkh\"{a}user, Basel, 2001.

\bibitem{NeuPen2}
Ji\v{r}\'{\i} Neustupa and Patrick Penel.
\newblock The role of eigenvalues and eigenvectors of the symmetrized gradient
  of velocity in the theory of the {N}avier-{S}tokes equations.
\newblock {\em C. R. Math. Acad. Sci. Paris}, 336(10):805--810, 2003.

\bibitem{NeuPen3}
Ji\v{r}\'{\i} Neustupa and Patrick Penel.
\newblock Regularity of a weak solution to the {N}avier-{S}tokes equation in
  dependence on eigenvalues and eigenvectors of the rate of deformation tensor.
\newblock In {\em Trends in partial differential equations of mathematical
  physics}, volume~61 of {\em Progr. Nonlinear Differential Equations Appl.},
  pages 197--212. Birkh\"{a}user, Basel, 2005.

\bibitem{Prodi}
Giovanni Prodi.
\newblock Un teorema di unicit\`a per le equazioni di {N}avier-{S}tokes.
\newblock {\em Ann. Mat. Pura Appl. (4)}, 48:173--182, 1959.

\bibitem{Serrin}
James Serrin.
\newblock On the interior regularity of weak solutions of the {N}avier-{S}tokes
  equations.
\newblock {\em Arch. Rational Mech. Anal.}, 9:187--195, 1962.

\bibitem{ChenZhangBesov}
Zhang Zhifei and Chen Qionglei.
\newblock Regularity criterion via two components of vorticity on weak
  solutions to the {N}avier-{S}tokes equations in {$\Bbb R^3$}.
\newblock {\em J. Differential Equations}, 216(2):470--481, 2005.

\end{thebibliography}
\end{document}